\def\tank#1{\protected@xdef\@thanks{\@thanks
		\protect\footnotetext[0]{#1}}}
\def\bigfoot{
	
	\@footnotetext}
\newcommand{\ea}{\end{array}}
\newtheorem{theorem}{Theorem}[section]
\newtheorem{lemma}{Lemma}[section]
\newtheorem{proposition}[theorem]{Proposition}
\newtheorem{remark}{Remark}[section]
\newtheorem{definition}[theorem]{Definition}
\def\<{{\langle}}
\def\>{{\rangle}}
\title{Strong well-posedness of the two-dimensional stochastic Navier-Stokes equation on moving domains}
\author{{Ping Chen}$^1$\footnote{E-mail:2025800007@hfut.edu.cn}~~~{Tianyi Pan}$^2$\footnote{E-mail:ptyzjg@gmail.com}~~~ {Tusheng Zhang}$^{2,3}$\footnote{E-mail:tusheng.zhang@manchester.ac.uk}
\\
\small  1. School of Mathematics,
\small  Hefei University of Technology,\\
\small  Hefei, Anhui 230009, China.\\
\small  2. School of Mathematical Sciences,
\small  University of Science and Technology of China,\\
\small  Hefei, Anhui 230026, China.\\
\small  3. Department of Mathematics, University of Manchester,\\
\small  Oxford Road, Manchester, M13 9PL, UK.
}
\date{}
\newenvironment{proof}{\par\noindent{\bf Proof:}}{\hspace*{\fill}$\blacksquare$\par}
\begin{document}
\maketitle
%\begin{minipage}{140mm}
%\begin{center}
\noindent \textbf{Abstract:}
In this paper, we establish the strong ($H^1$) well-posedness of the  two dimensional stochastic Navier-Stokes equation with multiplicative noise on moving domains. Due to the nonlocality effect, this equation exhibits a ``piecewise" variational setting. Namely the global well-posedness of this equation is {obtained through} the well-posedness of a family of stochastic partial differential equations (SPDEs) in the variational setting on small time-{intervals}. We first {consider} the well-posedness on each {small} time interval, which does not have (nonhomogeneous) coercivity. Subsequently, we give an estimate of {the} lower bound of {the} length of the time-interval, which enables us to obtain the global well-posedness.
%\end{center}

%\end{minipage}

\vspace{4mm}

%\noindent \textbf{AMS Subject Classification}: Primary 60H15 Secondary 35R60, 37L55.

\vspace{3mm}
\noindent \textbf{Key Words:}
Stochastic partial differential equations; Navier-Stokes equation; Moving domains; Piecewise variational structure.
\numberwithin{equation}{section}
\vskip 0.3cm
\noindent \textbf{AMS Mathematics Subject Classification:} Primary 35R37; Secondary 60H15.
\tableofcontents
\section{Introduction}
In this paper, we establish the well-posedness of the two dimensional stochastic Navier-Stokes equation on moving domains $\{\mathcal{O}_t\}_{t\in[0,T]} $, which is written as follows
\vskip -0.5cm
\begin{numcases}{}\label{ns}
	\nonumber\dot{u}(t,x)+(u\cdot\nabla) u(t,x)+\nabla p=\Delta u(t,x)+\sigma\big(t,u(t)\big)\dot{W}, \quad (t,x)\in \mathcal{D}_T,\\
	\nonumber\mathrm{div}\ u(t,x)=0, \quad (t,x)\in \mathcal{D}_T,\\
	\nonumber u(t,x)=0, \ \ \ \ \ \ \ \ (t,x)\in \partial\mathcal{D}_T,\\
	u(0,y)=u_0(y), \ \ \ \ \ \ \   y\in \mathcal{O}_0.
\end{numcases}
The noncylindrical domain $\mathcal{D}_T\subseteq [0,T]\times\mathbb{R}^{2}$ and its boundary $\partial\mathcal{D}_T$ are defined as
\vskip -0.6cm
\begin{align}\label{09022055}
	\mathcal{D}_T=\mathop{\scalebox{1.5}[1.5]{$\cup$}}_{t\in[0,T]} \{t\}\times\mathcal{O}_t,\  	\partial\mathcal{D}_T=\mathop{\scalebox{1.5}[1.5]{$\cup$}}_{t\in[0,T]} \{t\}\times\partial\mathcal{O}_t.
\end{align}
In this equation, $u:\mathcal{D}_T\rightarrow\mathbb{R}^2$ represents the unknown velocity of an incompressible viscous fluid, $p$ denotes the pressure and the stochastic force $\sigma(t,u)dW$ represents the influence of internal, external or environmental noises.\par
In the real world, the evolution of objects is often accompanied with  the movement of the boundary of the occupied region. Moving boundary problems are ubiquitous in nature, technology and engineering. The Navier-Stokes equation on moving domains is one of the most important models in this field and has numerous applications, e.g. in biomedical engineering, cardiovascular medicine and geophysics. We {refer} the readers to
\cite{C} for a recent survey.\par
The research on the deterministic counterpart ($\sigma=0$) of equation (\ref{ns}) has been ongoing for half a century. Regarding weak solutions, Inoue and Wakimoto \cite{IW}, as well as Miyakawa and Teramoto \cite{MT}, {obtained} the existence of a weak solution  on bounded moving domains. Salvi \cite{R} {established} the existence of a weak solution on moving exterior domains. Suitable weak solutions are also constructed by Choe, Jang and Yang \cite{CJY}. A recent study in \cite{CFLS} focused
on the pullback attractors of the weak solutions. \par
Based on the maximal regularity theory and the (nonhomogeneous) semigroup theory, there has also been some important results regarding the $L^p$ solution to the deterministic stochastic Navier-Stokes equation on moving domains. Saal \cite{S} investigated the unique strong  local $W^{1,p}$ solution. Farwig, Kozono, Tsuda and Wegmann \cite{FKTW} constructed the unique $W^{1,p}$ periodic solution for (\ref{ns}) on slightly-deformed bounded moving domains.  Farwig and Tsuda \cite{FT} constructed the unique $L^p$ solution on slightly-deformed half space within a certain function class.\par
However, in the stochastic case, difficult issues arise. We now highlight the challenges we face in the stochastic case. \par
Firstly, when considering the weak solution, although the well-posedness for two dimensional case has been established in some papers, such as \cite{MT} and \cite{SSY}, their proofs on the uniqueness are informal. {We also mention} that in our previous work \cite{PWZZ}, we formulate a general setting, the so called nonhomogeneous  monotonicity, under which the uniqueness of SPDEs on moving domains can be established. Unfortunately, some of the conditions there is not satisfied by the stochastic Navier-Stokes equations.
%Mathematically, (\hyperlink{C3}{C3})(\hyperlink{C4}{C4}) in \cite{PWZZ} do not hold for this system, which prevents us from transforming the system into an abstract evolution equation in a {suitable} framework, regardless of how small the deformation is.
Intuitively, the obstacle stems from the nonlocal trait of this equation.   The nonlocality makes the deformation of domain a ``singular perturbation" for weak solutions. A similar difficulty was mentioned in \cite{DGH}. Also, it is for the same reason that in \cite{DMM}, a rigid restriction on the movement of the domain is posed. \par
Among those methods used to establish $L^p$ solutions of deterministic Navier-Stokes equations on moving domians, pathwise arguments and time-differentibility are frequently involved, suggesting that the techniques {are not} suitable for the stochastic case. Additionally, with their methods, smallness of the external force should be required, which is not reasonable in stochastic settings. \par
%Given the points discussed above, we do not focus on the weak solution or the $L^p$ solution in this paper.
Instead, {in this paper}, we {consider} the strong ($H^1$) solution of the stochastic Navier-Stokes equation (\ref{ns}). We prove the well-posedness of the strong solution to the two dimensional stochastic Navier-Stokes equation on moving domains $\mathcal{D}_T$. Although nonlocality {is still the main} obstacle, this challenge {can be overcomed in the $H^1$-setting}. Moreover, in our paper, no smallness assumptions are made either on the force or on the deformation of the domain.\par
In {approach}, we first apply the Piola transformation, which has been previously used in many papers such as \cite{IW,MT,FKTW,FKW}, to transform the original equation into an equation on a fixed domain. Through a rigorous proof concerning the well-posedness of a family of generalized steady Stokes problems, we {show} that the stochastic Navier-Stokes equation (\ref{ns}) exhibits a ``piecewise" variational setting. That is, the well-posedness of (\ref{ns}) on $[0,T]$ is decomposed into the well-posedness of a family of SPDEs considered on small sub-intervals $\{((i-1)\delta,i\delta\wedge T]\}_{i\geq 1}$ under different Gelfand triplets $V_i\dot{=}H^2\cap H^1_{0,\sigma}(\mathcal{O}_{{(i-1)\delta}})\hookrightarrow H_i\dot{=}H^1_{0,\sigma}(\mathcal{O}_{{(i-1)\delta}})\hookrightarrow V_i^*\dot{=}L^2_{\sigma}(\mathcal{O}_{{(i-1)\delta}})$, $i\leq N\dot{=}\lceil{\frac{T}{\delta}}\rceil$, written as follows
\vskip -0.5cm
\begin{numcases}{}\label{spde1}
	dX^i_t=A^i(t,X^i_t)dt+ B^i(t,X^i_t)dW^i_t, \quad t\in ({(i-1)\delta},{i\delta}\wedge T] ,\\
	X^i_{{(i-1)\delta}}=G_{i}X_{{(i-1)\delta}}^{i-1}\in H_i.\nonumber
\end{numcases}
In (\ref{spde1}), $G_i:H_{i-1}\rightarrow H_i,\ i\leq N$ is a family of bounded operators,
$$A^i(\cdot,\cdot):((i-1)\delta,i\delta\wedge T]\times V_i\rightarrow V_i^*, B^i:((i-1)\delta,i\delta\wedge T]\times V_i\rightarrow\ L_2(U,H_i),$$
are a family of measurable maps.\par
To solve {the equation \eqref{spde1} on each fragment $({(i-1)\delta},i\delta\wedge T]$,
%{the equation \eqref{spde1} still does not fit into the framework in \cite{PWZZ} because} neither the nonhomogeneous monotonicity assumption (H2) nor the nonhomogeneous coercivity assumption (H4) in \cite{PWZZ} hold for equation (\ref{spde1}). To address this issue,
 we first consider the globally modified Navier-Stokes equation, where $g_N(r)=1\wedge \frac{N}{r}$,
\vskip -0.5cm
\begin{numcases}{}\label{gns}
	\nonumber\dot{u}(t,x)+g_N(\|\nabla u\|_{L^2(\mathcal{O}_t)})(u\cdot\nabla) u(t,x)+\nabla p=\Delta u(t,x)+\sigma\big(t,u(t)\big)\dot{W},\ (t,x)\in \mathcal{D}_T,\\
	\nonumber\mathrm{div}\ u(t,x)=0,\ (t,x)\in \mathcal{D}_T,\\
	\nonumber u(t,x)=0,\ \ \ \ \ \ (t,x)\in \partial\mathcal{D}_T,\\
	u(0,y)=u_0(y), \ \ \ \ \ y\in \mathcal{O}_0.
\end{numcases}
We find that the transformed version of the modified equation (\ref{gns}) on each time interval $((i-1)\delta,i\delta\wedge T]$ under the Gelfand triplet $H^2\cap H^1_{0,\sigma}(\mathcal{O}_{{(i-1)\delta}})\hookrightarrow H^1_{0,\sigma}(\mathcal{O}_{{(i-1)\delta}})\hookrightarrow L^2_{\sigma}(\mathcal{O}_{{(i-1)\delta}})$ satisfies the framework in \cite{PWZZ}, which gives the well-posedness. To obtain the well-posedness of the equation (\ref{spde1}) on each time interval $((i-1)\delta,i\delta\wedge T]$, inspired by the estimate in \cite{KV}, we establish a uniform in $N$ moment estimate for $\Theta(\|\nabla u(t)\|_{L^2(\mathcal{O}_t)}^2)$ where $\Theta(x)=\mathrm{log}(1+\mathrm{log}(1+x))$. Based on this uniform moment estimate, we can let $N$ tend to $\infty$, proving the well-posedness of the equation  (\ref{spde1}). Finally, {piecing the equation \eqref{spde1} on small intervals together}, we successfully establish the global existence of  the stochastic Navier-Stokes equation (\ref{ns}).
%We would like to mention that this is the first time such an expeditious method is used to address the strong well-posedness of stochastic Navier-Stokes equations. \par
\par
Concerning the uniqueness, unlike the case for the weak solution, the equation (\ref{spde1}) hold on $V_i^*=L^2_\sigma(\mathcal{O}_{(i-1)\delta})$. We can then use the generalized It\^{o}'s formula on $L^2_\sigma$ established in \cite{PWZZ} to deduce the uniqueness on each time interval $((i-1)\delta,i\delta\wedge T]$, leading to the uniqueness on $[0,T]$.\par
It's worth mentioning that the selection of $\delta$ depends on a uniform $L^2$ estimate for a family of generalized steady Stokes problems. To the best of our knowledge, the existing results on the well-posedness of generalized steady Stokes equations all have strict assumptions on the elliptic operators, which {can not be applied to} the elliptic operators {arising} in this paper (see e.g. \cite{BS,M} and references therein). To deal with this issue, we use the operator perturbation theory to show that, on a small time interval $[0,t_0]$, the generalized Stokes operators are close to the original Stokes operator, which has an $L^2$ estimate. Then we replace the reference {domain} $\mathcal{O}_{0}$ by $\mathcal{O}_{t_0}$ and repeat the previous argument, proving the uniform $L^2$ estimate on $[
t_0,t_0+t_1]$. The existence of a uniform lower bound for $\{t_i\}_i$ is guaranteed by a combination of the uniform $L^2$ estimate for steady Stokes equations on $\{\mathcal{O}_t\}_{t\in[0,T]}$ established in \cite{H} and a uniform in time estimate on the continuity of the coefficients of the elliptic operators {involved}. This lower bound serves as $\delta$ in the above-mentioned paragraphs.\par
{The remaining part of this paper is organized as follows.}
%First, we establish the well-posedness of equation (\ref{ns}) on a small interval $[0,t_0]$. Next, we give a lower bound on the length of the interval when an arbitrary $\mathcal{O}_t$, $t\in[0,T]$, is chosen as the reference domain. The strong well-posedness on other intervals $((i-1)\delta,i\delta\wedge T]$ is {handled similarly} as that on $(0,t_0]$.
 In Section 2, we provide preliminaries regarding the function spaces and the Piola transformation of each term in equation (\ref{ns}). In Section 3, we introduce the main result of this paper. The conditions (\hyperlink{C1}{C1})-(\hyperlink{C4}{C4}) proposed in \cite{PWZZ} are verified on a small time interval $[0,t_0]$ in Section 4.1. These conditions play an important role when we establish the equivalence between modified equation (\ref{gns}) and its transformed equation  on $[0,t_0]\times\mathcal{O}_0$ in Section 4.2. In Section 4.3, we  prove the well-posedness of the modified equation (\ref{gns}) on the small time interval $[0,t_0]$. In Section 5, we provide a uniform moment estimate to obtain the well-posedness of the stochastic Navier-Stokes equation (\ref{ns}) on the small interval $[0,t_0]$. In Section 6, we give a lower bound of the length of the interval $[0,t_0]$ when different $\mathcal{O}_t$, with $t\in[0,T]$,  are chosen as the reference domain, thus proving the well-posedness on $[0,T]$.
\vskip 0.3cm
Here are some conventions used throughout this paper:

(i) We let $C$, $c$, etc. represent {generic} positive constants whose value may change from line to line. Other constants will be denoted as $c_0$, $c_1$, \dots. The dependence of constants on parameters, if necessary, will be indicated, e.g. $C_N$.
%$C$ denotes a generic positive constant whose value may vary from line to line.

(ii) We denote $\mathcal{L}(X,Y)$ to be the space of bounded linear operators from $X$ to $Y$, with its norm denoted by $\|\cdot \|_{\mathcal{L}(X,Y)}$. If $X = Y$, we use $\mathcal{L}(X)$ instead of $\mathcal{L}(X,Y)$, with its norm denoted by $\| \cdot \|_{\mathcal{L}(X)}$.

(iii) We denote by $\mathcal{L}_2(U,H)$ the space of Hilbert-Schmidt operators from a separable Hilbert space $U$ to another separable Hilbert space $H$, equipped with the usual Hilbert-Schmidt norm $\|\cdot\|_{L_2(U,H)}$.

(iv) We use Einstein summation convention, i.e. take sum over repeated indices.

(v) If the time dependence of certain quantities is clear from the context, we suppress the $t$-notation for brevity, unless it needs to be explicitly emphasized.

\section{Preliminaries}\label{sec:preliminaries}

Let $\{\mathcal{O}_t\}_{t\in[0,T]}$ be a family of $C^3$ bounded open domains in $\mathbb{R}^2$, indexed by $t \in [0,T]$. We assume that there exists a map $r(\cdot, \cdot): [0,T] \times {\overline{\mathcal{O}}_0} \rightarrow \mathbb{R}^2 $ such that
\begin{itemize}
	\item [(i)] For any $t \in [0, T]$, $r(t, \cdot): \mathcal{O}_0 \rightarrow \mathcal{O}_t$ is a $C^3$ diffeomorphism between $\mathcal{O}_0$ and $\mathcal{O}_t$.
	\item [(ii)] $r \in C^{2,3}_{t,x}\big([0, T] \times \overline{\mathcal{O}}_0, \mathbb{R}^2 \big)$, that is, for any $k \leq 2$, $|\alpha|\leq 3$,
	\[ \frac{\partial^{k+|\alpha|} }{\partial t^k \partial x^\alpha}r(\cdot,\cdot)\in C([0,T]\times\overline{\mathcal{O}}_0,\mathbb{R}^2). \]
\end{itemize}
Furthermore, we denote the inverse map of $r(t, \cdot)$ by $\bar{r}(t, \cdot): \mathcal{O}_t \rightarrow \mathcal{O}_0$. As in \cite{MT,FKW}, we assume that
\begin{equation}\label{M}
	\mathrm{det}\Big(\big\{\frac{\partial \bar{r}_i}{\partial x_j}(t,x)\big\}_{i,j\leq 2}\Big)=1, \quad (t, x) \in \mathcal{D}_T,
\end{equation}
{meaning that the transformation $r(t, \cdot)$ preserves the volume of the domains.}

Since we will transform the stochastic Navier-Stokes equation \eqref{ns} into an equation on a fixed domain,
for a function $u: \mathcal{D}_T \rightarrow \mathbb{R}^2$, we introduce the so-called Piola transformation:
\begin{align}\label{01270222}
	\widetilde{u}_i(t,y)=\frac{\partial \bar{r}_i}{\partial x_j}\big(t,r(t,y)\big)u_j(t,r(t,y)):[0,T]\times\mathcal{O}_0\rightarrow \mathbb{R}^2.
\end{align}
{If $\mathrm{div}_x\ u(t,x)=0$}, then for the $\mathbb{R}^2$-valued vector field $\widetilde{u}$, we have
\begin{align*}
	\mathrm{div}_y\widetilde{u}(y)=&	\frac{\partial \widetilde{u}_i}{\partial y_i}(t,y)=\frac{\partial}{\partial y_i}\big[\frac{\partial \bar{r}_i}{\partial x_j}(t,r(t,y))u_j(t,r(t,y))\big]\\=& \frac{\partial^2 \bar{r}_i}{\partial x_j\partial x_k}\big(t,r(t,y)\big)\frac{\partial r_k}{\partial y_i}(t,y)\cdot u_j\big(t,r(t,y)\big)+\frac{\partial \bar{r}_i}{\partial x_j}\big(t,r(t,y)\big)\frac{\partial u_j}{\partial x_k}\big(t,r(t,y)\big)\frac{\partial r_k}{\partial y_i}(t,y)\\=&\frac{\partial u_k}{\partial x_k}\big(t,r(t,y)\big)=\mathrm{div}_x\ u(t,r(t,y))=0,
\end{align*}
where the first term on the right-hand side of the third equality disappears due to \eqref{M}.

Below, we apply the Piola transformation to each term in equation \eqref{ns}. For the time derivative term, we have
\begin{align}\label{01290152}
	\frac{\partial u_i}{\partial t}(t,x)=&\frac{\partial \widetilde{u}_j}{\partial t}(t,\bar{r}(t,x))\frac{\partial r_i}{\partial y_j}(t,\bar{r}(t,x))\ +\frac{\partial \widetilde{u}_j}{\partial y_k}(t,\bar{r}(t,x))\frac{\partial\bar{r}_k}{\partial t}(t,x)\frac{\partial r_i}{\partial y_j}(t,\bar{r}(t,x))
	\\ &\ +\widetilde{u}_j(t,\bar{r}(t,x))\frac{\partial^2 r_i}{\partial y_j\partial t}(t,\bar{r}(t,x))+\widetilde{u}_j(t,\bar{r}(t,x))\frac{\partial^2 r_i}{\partial y_j\partial y_k}(t,\bar{r}(t,x))\frac{\partial \bar{r}_k}{\partial t}(t,x).	\nonumber
\end{align}
We denote the Christoffel symbol by $\Gamma^i_{jk}(t,y)=\frac{\partial \bar{r}_i}{\partial x_l}(t,r(t,y))\frac{\partial^2 r_l}{\partial y_j\partial y_k}(t,y)$, and denote the covariant derivative of $\widetilde{u}$ by
\begin{align}\label{01270740}
	(\nabla_j \widetilde{u})_i(y)=\frac{\partial \widetilde{u}_i}{\partial y_j}+\Gamma^i_{jk}(t,y)\widetilde{u}_k(t,y).
\end{align}
Define
\begin{align}\label{02061619}
	h_{jk}(t,y)=\frac{\partial r_i}{\partial y_j}(t,y)\frac{\partial r_i}{\partial y_k}(t,y),\ \ h^{jk}(t,y)=\frac{\partial \bar{r}_k}{\partial x_i}\frac{\partial \bar{r}_j}{\partial x_i}(t,r(t,y)).
\end{align}
Then, using {a similar computation leading to} \eqref{01290152}, we obtain
\begin{equation}\label{1}
	\frac{\partial u_i}{\partial x_l}(t, r(t,y))=\frac{\partial r_i}{\partial y_j}\frac{\partial \bar{r}_k}{\partial x_l}(\nabla_k \tilde{u})_j,
\end{equation}
\begin{equation}\label{2}
	\Delta u_p(t, r(t,y))=\frac{\partial}{\partial y_k}\Big(\frac{\partial}{\partial y_j}(\widetilde{u}_l\frac{\partial r_p}{\partial y_l})h^{jk}\Big).	
\end{equation}
By \eqref{01270222} and \eqref{1}, we get
\begin{align}\label{01291431}
	\left [\frac{\partial \bar{r}_i}{\partial x_p}(u_j\cdot\nabla_ju)_p\right ](t, r(t,y))=\widetilde{u}_j({\nabla_j \widetilde{u}})_i\ \dot{=}\ N(\widetilde{u},\widetilde{u})^i.
\end{align}

Next, we define the following operator: for any $v:\mathcal{O}_0\rightarrow \mathbb{R}^2$,
\begin{align}\label{3}
	(L_hv)_i=\frac{\partial \bar{r}_i}{\partial x_p}(t, r(t,y))\frac{\partial}{\partial y_k}\Big(\frac{\partial }{\partial y_j}(v_l\frac{\partial r_p}{\partial y_l})h^{jk}\Big).
\end{align}
Note that if we denote the inversion of Piola transformation on $v: [0,T]\times\mathcal{O}_0\rightarrow \mathbb{R}^2$ by
\begin{align}\label{01270556}
	\overline{v}_i(t,x)=\frac{\partial {r}_i}{\partial y_j}\big(t,\bar{r}(t,x)\big)v_j(t,\bar{r}(t,x)):\mathcal{D}_T\rightarrow \mathbb{R}^2,
\end{align}
then for every $w\in[H^2(\mathcal{O}_0)\cap H_0^1(\mathcal{O}_0)]^2$, we have
\begin{align}\label{01270721}
	\int_{\mathcal{O}_0} (L_hv)_kw_lh_{kl}dy=-\int_{\mathcal{O}_0} h^{kl}h_{ij}
	\big(\nabla_k v\big)_i (\nabla_l w)_j dy=-\int_{\mathcal{O}_t}\nabla \overline{v}(x)\cdot\nabla\overline{w}(x) dx.
\end{align}

Now, we introduce some function spaces and notations.

Given a $C^3$ bounded open domain $\mathcal{O}$, denote by $L_\sigma^2(\mathcal{O})$ the space of $\mathbb{R}^2$-valued square integrable, divergence-free functions on $\mathcal{O}$, which is equipped with the norm
\begin{align*}
	\|u\|^2_{L^2(\mathcal{O})}=\sum_{i=1}^{2}\int_{\mathcal{O}} |u_i(x)|^2dx.
\end{align*}

Let $H_{0,\sigma}^1(\mathcal{O})$ denote the closed subspace of $\big[H_0^1(\mathcal{O})\big]^2$, containing all divergence-free functions, equipped with the norm
\begin{align*}
	\|u\|^2_{H^1(\mathcal{O})}=\sum_{i=1}^{2}\int_{\mathcal{O}} |u_i(x)|^2dx+\sum_{i=1}^{2}\int_{{\mathcal{O}}}|\nabla u_i(x)|^2dx.
\end{align*}
By the Poincar\'e inequality, the $\|\cdot\|_{H^1(\mathcal{O})}$ norm is equivalent to the norm $\|\cdot\|_{1,0}$, defined by
$$\|u\|^2_{1,0}=\int_{{\mathcal{O}}}|\nabla u(x)|^2 dx.$$

Let $H^{-1}_\sigma(\mathcal{O})$ be the dual space of $H_{0,\sigma}^1(\mathcal{O})$. The corresponding norm in $H^{-1}_\sigma(\mathcal{O})$ is given by
$$\|f\|_{H^{-1}_\sigma(\mathcal{O})}\dot{=}\sup_{v\in H_{0,\sigma}^1(\mathcal{O})}\frac{ { _{H_\sigma^{-1}(\mathcal{O})}\langle f,v\rangle_{H_{0,\sigma}^{1}(\mathcal{O})} }   }{\|v\|_{1,0}},$$
{where $ _{H_\sigma^{-1}(\mathcal{O})}\langle \cdot , \cdot \rangle_{H_{0,\sigma}^{1}(\mathcal{O})}$ represents the dual pair between $H^{-1}_\sigma(\mathcal{O})$ and $H_{0,\sigma}^1(\mathcal{O})$.}

Similarly, we denote by $H^2\cap H^1_{0,\sigma}(\mathcal{O})$ the subspace of $\big[ H^2\cap H_0^1(\mathcal{O})\big]^2$, containing all divergence-free functions, equipped with the norm
\begin{align*}
	\|u\|^2_{H^2(\mathcal{O})}=\int_{\mathcal{O}} |u(x)|^2dx+\int_{\mathcal{O}} |\nabla u(x)|^2dx+\int_{\mathcal{O}} |D^2 u(x)|^2dx.
\end{align*}
Define $A=P\Delta$ as the Stokes operator on $\mathcal{O}$, where $P$ denotes the Leray projection. Using the operator $A$, we can also equip the space $\big[H^2\cap H_{0,\sigma}^1(\mathcal{O})\big]^2$ with the norm
$$\|u\|_5=\|Au\|_{L^2(\mathcal{O})},$$
which is equivalent to the norm $\|\cdot\|_{H^2(\mathcal{O})}$ by Proposition \ref{estimate3} in the Appendix.
%We will consider $\tilde{u}$ satisfying the equation on the Gelfand triple
%$$H^2\cap H^1_{0,\sigma}(\mathcal{O})\hookrightarrow H^1_{0,\sigma}(\mathcal{O})\hookrightarrow L_\sigma^2(\mathcal{O})  . $$
For $f\in L_\sigma^2(\mathcal{O}), v\in H^2\cap H^1_{0,\sigma}(\mathcal{O})$, the dual pair $\langle f,v\rangle$ under the Gelfand triplet $H^2\cap H_{0,\sigma}^1(\mathcal{O})\hookrightarrow H_{0,\sigma}^1(\mathcal{O})\hookrightarrow L^2_\sigma(\mathcal{O})$ is defined as
$$\langle f ,v\rangle=-(f,Av)_{L^2(\mathcal{O})}.$$

We will consider the above norms on the family of domains $\{\mathcal{O}_t\}_{t\in[0,T]}$. By Proposition \ref{estimate3} in the Appendix, we know that the norms $\| \cdot \|_{H^2(\mathcal{O}_t)}$ and $\| \cdot \|_5$ are equivalent. Using the Rayleigh-Faber-Krahn inequality (see e.g. \cite{K}), we also derive that the norm $\|\cdot\|_{H^1(\mathcal{O}_t)}$ is equivalent to the norm $\|\cdot\|_{1,0}$ on $H^1_{0,\sigma}(\mathcal{O}_t)$, and the equivalence constants for both pairs of norms are independent of $t \in [0,T]$. Therefore, we will use these two pairs of norms interchangeably in the later sections.

\section{Statement of the main result}

In this section, we introduce the precise definition of solutions and state the main result.  Recall the Piola transformation $\widetilde{\cdot}$ and its inverse $\overline{\cdot}$ given by \eqref{01270222} and \eqref{01270556}, respectively. Let us begin by introducing necessary function spaces on time-dependent domains. Indeed, the function spaces on non-cylindrical domains $\mathcal{D}_T$ are defined similarly to those in Subsection 3.3 in \cite{PWZZ}, we write down them here for the sake of {completeness}. Denote by ${C^2_{c,\sigma}([0,T] \times \mathcal{O}_0)}$ the space of all $\mathbb{R}^2$-valued $C^2$ functions on $[0,T] \times \mathcal{O}_0$ with compact support and zero divergence. We set
\begin{align*}
	C\big([0,T],H_{0,\sigma}^1(\mathcal{O}_\cdot)\big)\dot{=}& \big \{  \{ u(t) \} _{t\in[0,T]}: u(t) \in H_{0,\sigma}^1\big(\mathcal{O}_t) \  \mbox{for any} \ t \in [0,T]  \\ & \  \mbox{such that} \ \widetilde{u} \in  C\big([0,T], H_{0,\sigma}^1(\mathcal{O}_0)\big)   \big \} ,
\end{align*}

\vskip -1.0cm

\begin{align*}
	C\big( [0,T], L_{\sigma}^2(\mathcal{O}_\cdot) \big) \dot{=}& \big \{  \{ u(t) \} _{t\in[0,T]}: u(t) \in  L_{\sigma}^2(\mathcal{O}_t) \  \mbox{for any} \ t \in [0,T]  \\ & \  \mbox{such that} \ \widetilde{u} \in  C\big( [0,T], L_{\sigma}^2(\mathcal{O}_0) \big)   \big \} ,
\end{align*}

\vskip -1.0cm

\begin{align*}
	L^2\big([0,T],H^2\cap H_{0,\sigma}^1(\mathcal{O}_\cdot)\big) \dot{=}& \big \{  \{ u(t) \} _{t\in[0,T]}: u(t) \in  H^2\cap H_{0,\sigma}^1(\mathcal{O}_t) \  \mbox{for any} \ t \in [0,T]  \\ & \  \mbox{such that} \ \widetilde{u} \in  L^2\big([0,T],H^2\cap H_{0,\sigma}^1(\mathcal{O}_0)\big)   \big \} ,
\end{align*}

\vskip -1.0cm

\begin{align*}
	C^2_{c,\sigma}(\mathcal{D}_T) \dot{=}& \big \{ \varphi: \mathcal{D}_T \rightarrow \mathbb{R}^2 |\ \widetilde{\varphi} \in C^2_{c,\sigma}([0,T] \times \mathcal{O}_0)   \big \},
\end{align*}
where the corresponding norms on $C\big([0,T],H_{0,\sigma}^1(\mathcal{O}_\cdot)\big)$, $C\big( [0,T], L_{\sigma}^2(\mathcal{O}_\cdot) \big)$ and $L^2\big([0,T],H^2\cap H_{0,\sigma}^1(\mathcal{O}_\cdot)\big)$ are defined as $$\sup_{t \in [0,T]}\| \cdot \|_{H^1(\mathcal{O}_t)} ,  \sup_{t \in [0,T]}\| \cdot \|_{L^2(\mathcal{O}_t)}  \text{ and }  (\int_0^T \|\cdot \|^2_{H^2(\mathcal{O}_t)}dt)^{\frac{1}{2}}$$ respectively.

Let $(\Omega,\mathcal{F}, \{\mathcal{F}_{t}\}_{t\in [0,T]},\mathbb{P})$ be a filtered probability space where $\{\mathcal{F}_{t}\}_{t\in[0,T]}$ satisfies the usual conditions. We say {that} a family $\{ u(t) \}_{t \in [0,T]}$ {is} an $H^1_{0,\sigma}(\mathcal{O}_\cdot)$-valued process if for any $t \in [0,T]$, $u(t)$ is an $H^1_{0,\sigma}(\mathcal{O}_t)$-valued random variable. Concerning predictability, an $H^1_{0,\sigma}(\mathcal{O}_\cdot)$-valued random process $\{ u(t) \}_{t \in [0,T]}$ is said to be predictable with respect to the filtration $\{\mathcal{F}_{t}\}_{t\in [0,T]}$ if the transformed process $\{ \widetilde{u}(t) \}_{t \in [0,T]}$ is an $H^1_{0,\sigma}(\mathcal{O}_0)$-valued predictable process with respect to $\{\mathcal{F}_t\}_{t\in[0,T]}$.

Now we {introduce} the Wiener process $W$ and the diffusion coefficient $\sigma$ {appeared} in equation \eqref{ns}. We assume that the Wiener process $W$ is an $l^2$ cylindrical Wiener process, represented by a family of independent standard Wiener processes $\{\beta^k\}_{k\geq 1}$. The diffusion coefficient $\sigma(\cdot, \cdot)$ is represented as a family of mappings $\big\{ \{\sigma_k(t,\cdot):H_{0,\sigma}^1(\mathcal{O}_t)\rightarrow H_{0,\sigma}^1(\mathcal{O}_t)\}_{t \in [0,T]} \big\}_{k \geq 1}$. %such that the transformed diffusion coefficient
%$$\widetilde{\sigma}_k(\cdot,\cdot):[0,T]\times H_{0,\sigma}^1(\mathcal{O}_0)\rightarrow H_{0,\sigma}^1(\mathcal{O}_0),$$
%defined by
%\begin{align}\label{03302222}
%\widetilde{\sigma}_k(t,v)\ \dot{=}\ \widetilde{\sigma_k(t,\bar{v})},
%\end{align}
%for any $(t, v) \in [0,T] \times \mathcal{O}_0$, is measurable for every $k \geq 1$.
In addition, we assume that there exists $f\in L^1([0,T],\mathbb{R}_+)$ such that for a.e. $t \in [0,T]$, and for any $u, v \in H^1_{0,\sigma}(\mathcal{O}_t)$,
\begin{align}
	&\label{02062230-1}\sum_{k=1}^{\infty}\|\sigma_k(t,u)-\sigma_k(t,v)\|^2_{H^1(\mathcal{O}_t)}\leq f(t)\|u-v\|^2_{H^1(\mathcal{O}_t)},\\\label{02062230-2}&\sum_{k=1}^{\infty}\|\sigma_k(t,u)\|_{H^1(\mathcal{O}_t)}^2\leq f(t)\big(1+\|u\|^2_{H^1(\mathcal{O}_t)}\big),\\\label{02062230-3}&
	\sum_{k=1}^{\infty}\|\sigma_k(t,u)-\sigma_k(t,v)\|^2_{L^2_\sigma(\mathcal{O}_t)}\leq f(t)\|u-v\|^2_{L^2_\sigma(\mathcal{O}_t)}.
\end{align}

We now introduce the definition of solution for equation \eqref{ns}.
\begin{definition}\label{ws3}
	Let $u_0\in H_{0,\sigma}^1(\mathcal{O}_0)$. We say that an $H^1_{0,\sigma}(\mathcal{O}_\cdot)$-valued random process $\{ u(t)\}_{t \in [0,T]}$ is a strong solution to \eqref{ns} if
	\begin{itemize}
		\item[(i)]: $u\in C\big([0,T], {H}_{0,\sigma}^{1}\big(\mathcal{O}_\cdot)\big)\cap L^2\big([0,T],{H}^2\cap {H}^1_{0,\sigma}(\mathcal{O}_\cdot) \big)$                  $a.e.$ and
		$$ E\Big[\sup_{t\in[0,T]}\|u(t)\|^2_{{L}^{2}(\mathcal{O}_t)}+\int_{0}^{T}\|u(t)\|_{H^1(\mathcal{O}_t)}^2dt\Big]<\infty. $$
		\item[(ii)]: $u$ is an ${H}^{1}_{0,\sigma}({\mathcal{O}_\cdot})$-valued predictable random process with respect to the filtration $ \{ \mathcal{F}_t \} _{t\in[0,T]} $.
		\item[(iii)]: For any $\varphi\in C_{c,\sigma}^2({
			\mathcal{D}_T})$ and $t\in[0,T]$, we have $\mathbb{P}-a.s.$
		\begin{align}\label{ws2}
			&\nonumber \   \   \  \int_{\mathcal{O}_t}u(t,x)\cdot\varphi(t,x)dx-\int_{\mathcal{O}_0} u_0(y)\cdot\varphi(0,y)dy  \\ &\nonumber=\int_{0}^{t}\int_{{\mathcal{O}_s}} \Delta u(s,x)\cdot\varphi(s,x)dxds+\int_{0}^{t}\int_{{\mathcal{O}_s}}  u(s,x)\cdot{\varphi}'(s,x)dxds \\
			&\ \ + \int_{0}^{t}\int_{{\mathcal{O}_s}}(u\cdot\nabla \varphi)(s,x) u(s,x)dxds+ \sum_{k=1}^{\infty}\int_{0}^{t}\int_{\mathcal{O}_s} \sigma_k(s, u(s))\cdot\varphi(s,x)dxd\beta^k_s,
		\end{align}
		{where ${\varphi}'(s,x)$ stands for the derivative of $\varphi$ w.r.t. the time variable.}
	\end{itemize}
\end{definition}

\begin{remark}
	The definition of solution for equation \eqref{gns} is similar to that of equation \eqref{ns}, with \eqref{ws2} in Definition \ref{ws3} {replaced by}
	\begin{align}\label{ws2'}
		&\nonumber \   \   \  \int_{\mathcal{O}_t}u(t,x)\cdot\varphi(t,x)dx-\int_{\mathcal{O}_0} u_0(y)\cdot\varphi(0,y)dy  \\ &\nonumber=\int_{0}^{t}\int_{{\mathcal{O}_s}} \Delta u(s,x)\cdot\varphi(s,x)dxds+\int_{0}^{t}\int_{{\mathcal{O}_s}}  u(s,x)\cdot{\varphi}'(s,x)dxds \\
		&\ \ + \int_{0}^{t}\int_{{\mathcal{O}_s}}g_N(\|u(s)\|_{H^1(\mathcal{O}_s)})(u\cdot\nabla \varphi)(s,x) u(s,x)dxds+ \sum_{k=1}^{\infty}\int_{0}^{t}\int_{\mathcal{O}_s} \sigma_k(s, u(s))\cdot\varphi(s,x)dxd\beta^k_s.
	\end{align}
	
\end{remark}

We now state the main result in this paper.
\begin{theorem}\label{mainresult1}
	Assuming $u_0 \in H_{0,\sigma}^1(\mathcal{O}_0)$, {and that} the diffusion coefficient $\sigma$ satisfies \eqref{02062230-1}-\eqref{02062230-3}. Then the stochastic Navier-Stokes equation \eqref{ns} admits  a unique strong solution.
\end{theorem}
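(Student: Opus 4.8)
The plan is to exploit the piecewise variational structure described in the introduction: instead of attacking \eqref{ns} directly on all of $[0,T]$, I would reduce it, via the Piola transformation $\widetilde{\cdot}$, to the family of fixed-domain SPDEs \eqref{spde1} posed on the subintervals $((i-1)\delta, i\delta\wedge T]$, solve each one in the variational framework of \cite{PWZZ}, and then concatenate the pieces. Fixing a reference domain $\mathcal{O}_0$ and applying $\widetilde{\cdot}$, the computations \eqref{01290152}--\eqref{01270721} turn \eqref{ns} into an equation under the Gelfand triplet $H^2\cap H^1_{0,\sigma}(\mathcal{O}_0)\hookrightarrow H^1_{0,\sigma}(\mathcal{O}_0)\hookrightarrow L^2_\sigma(\mathcal{O}_0)$, in which the Laplacian is replaced by the operator $L_h$ of \eqref{3} and the convective term by $N(\widetilde u,\widetilde u)$ from \eqref{01291431}.

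On a short interval $[0,t_0]$ I would first solve the globally modified equation \eqref{gns}. The cut-off $g_N(r)=1\wedge\frac{N}{r}$ renders the nonlinear term globally Lipschitz, so the transformed coefficients $A^i,B^i$ can be shown to satisfy the monotonicity-type conditions (C1)--(C4) of \cite{PWZZ}; verifying these (as planned in Section 4.1) yields a unique variational solution of the modified problem. The crux is then to remove the cut-off. Following the idea in \cite{KV}, I would establish a moment estimate for $\Theta\big(\|\nabla u(t)\|^2_{L^2(\mathcal{O}_t)}\big)$ with $\Theta(x)=\log(1+\log(1+x))$ that is uniform in $N$; this weak control of the $H^1$-norm suffices to pass to the limit $N\to\infty$ and obtain a solution of the unmodified equation \eqref{spde1} on $[0,t_0]$.

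For uniqueness, I would use that \eqref{spde1} holds as an identity in $V_i^*=L^2_\sigma(\mathcal{O}_{(i-1)\delta})$ and apply the generalized It\^o formula on $L^2_\sigma$ from \cite{PWZZ} to the difference of two solutions. Combining the Lipschitz bound \eqref{02062230-3} with the two-dimensional Ladyzhenskaya inequality to estimate the difference of the convective terms, a Gronwall argument gives pathwise uniqueness on each $((i-1)\delta, i\delta\wedge T]$, hence on $[0,T]$.

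The main obstacle, on which the admissible value of $\delta$ depends, is a uniform-in-$t$ $L^2$ estimate for the family of generalized steady Stokes problems attached to $L_h$; the existing Stokes theory (e.g. \cite{BS,M}) does not cover these variable-coefficient operators. To circumvent this I would argue by operator perturbation: on a sufficiently short interval the generalized Stokes operator stays close, in the relevant operator norm, to the genuine Stokes operator, whose $L^2$ estimate is available from \cite{H}, and the closeness is quantified through the uniform continuity in $t$ of the coefficients $h^{jk}$ and $\Gamma^i_{jk}$. This produces an interval length $t_0$ that is bounded below uniformly; sliding the reference domain from $\mathcal{O}_0$ to $\mathcal{O}_{t_0}$ and repeating the argument yields a uniform lower bound $\delta$, so that finitely many intervals cover $[0,T]$ and the concatenated process is the desired global strong solution.
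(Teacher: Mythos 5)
Your proposal follows essentially the same route as the paper's proof: the Piola transformation into a piecewise variational setting, well-posedness of the globally modified equation \eqref{gns} on a short interval within the framework of \cite{PWZZ}, removal of the cut-off through the uniform-in-$N$ moment estimate for $\Theta(\|\nabla u\|^2_{L^2(\mathcal{O}_t)})$ following \cite{KV}, uniqueness via the generalized It\^o formula on $L^2_\sigma$, and an operator-perturbation argument giving a uniform lower bound $\delta$ so that the solutions can be concatenated over $[0,T]$. The only slip is terminological: in the paper, conditions (\hyperlink{C1}{C1})--(\hyperlink{C4}{C4}) concern the time-dependent inner products and the operators $\iota_t^*$ (the geometry of the moving domain) and are independent of the cut-off, whereas what $g_N$ actually buys is the verification of the coefficient conditions (\hyperlink{H1}{H1})--(\hyperlink{H5}{H5}); this mislabeling does not affect the soundness of your plan.
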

\begin{proof}
	The proof is divided into three parts, which are {respectively} given in the next three sections. We first show in Section \ref{section1} that for every $N>0$, the globally modified Navier-Stokes equation \eqref{gns} has a unique solution $u^N$ on a small interval $[0,t_0]$. Then, in Section \ref{section5}, by establishing a uniform in $N$ moment estimate for $\Theta(\|\nabla u^N(t)\|_{L^2(\mathcal{O}_t)}^2)$, where $\Theta(x)=\mathrm{log}(1+\mathrm{log}(1+x))$, we will {obtain} the well-posedness of \eqref{ns} on {the small time interval} $[0,t_0]$. In Section \ref{section6}, we {will} show that the Navier-Stokes equation \eqref{ns} is well-posed on $[0, T]$ by {piecing together the solutions on small time intervals}. This is done by giving a lower estimate of $t_0$ when choosing different $\mathcal{O}_t$, $t\in[0,T]$ as the reference domains.
\end{proof}

\section{Well-posedness of the modified equation on small time intervals}\label{section1}

Recall the globally modified Navier-Stokes equation given by \eqref{gns}. In this section, we will prove the following result.
\begin{proposition}\label{proposition4.1}
	Assuming that $u_0 \in H_{0,\sigma}^1(\mathcal{O}_0)$ and the diffusion coefficient $\sigma$ in equation \eqref{gns} satisfies \eqref{02062230-1}-\eqref{02062230-3}. Then there exists a small time $t_0 > 0$ such that for every $N>0$, the equation \eqref{gns} has a unique solution on $[0,t_0]$.
\end{proposition}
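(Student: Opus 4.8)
The plan is to reduce Proposition \ref{proposition4.1} to the abstract variational well-posedness result of \cite{PWZZ} after transplanting the modified equation onto the fixed reference domain $\mathcal{O}_0$ by means of the Piola transformation. First I would apply the transformation $\widetilde{\cdot}$ to equation \eqref{gns} restricted to $[0,t_0]\times\mathcal{O}_\cdot$, using the term-by-term identities \eqref{01290152}--\eqref{01291431} together with the operator $L_h$ defined in \eqref{3}. After applying the Leray projection $P$ to eliminate the pressure, this recasts \eqref{gns} as an It\^o equation
\begin{equation*}
dX_t=\mathcal{A}(t,X_t)\,dt+\mathcal{B}(t,X_t)\,dW_t,\qquad X_0=\widetilde{u}_0,
\end{equation*}
in the variational setting associated with the Gelfand triple $H^2\cap H^1_{0,\sigma}(\mathcal{O}_0)\hookrightarrow H^1_{0,\sigma}(\mathcal{O}_0)\hookrightarrow L^2_\sigma(\mathcal{O}_0)$. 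The leading part of the drift $\mathcal{A}$ is the Leray-projected elliptic operator $PL_h$, whose principal coefficients are the metric quantities $h^{jk}(t,\cdot)$, augmented by the cut-off nonlinearity $-g_N(\|\nabla\overline{X}\|_{L^2})\,PN(X,X)$ and by the zeroth- and first-order terms produced by the time derivative of the transformation in \eqref{01290152}; the diffusion $\mathcal{B}$ is the push-forward of $\sigma$.

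Second, I would verify the structural conditions (C1)--(C4) of \cite{PWZZ} for $\mathcal{A}$ and $\mathcal{B}$ on $[0,t_0]$. Hemicontinuity (C1) and the growth bound (C4) follow from the smoothness of the coefficients recorded in Section \ref{sec:preliminaries}, combined with the two-dimensional estimate $\|N(u,u)\|_{L^2}\lesssim\|u\|_{L^4}\|\nabla u\|_{L^4}$ and Ladyzhenskaya-type interpolation; crucially, the factor $g_N$ keeps the nonlinear contribution subordinate to the $H^2$-norm. The decisive computations are coercivity (C3) and weak monotonicity (C2). Using the dual pairing $\langle f,v\rangle=-(f,Av)_{L^2}$ with $A=P\Delta$ the Stokes operator, and the integration-by-parts identity \eqref{01270721}, the principal term contributes $-(L_hX,AX)_{L^2}$, which is comparable to $-\|X\|_5^2\sim-\|X\|_{H^2(\mathcal{O}_0)}^2$ precisely when the metric coefficients coincide with the Euclidean ones.

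The main obstacle is controlling the discrepancy between $L_h$ and the Stokes operator $A$. Since $\mathcal{O}_0$ is the reference domain we have $r(0,\cdot)=\mathrm{id}$, whence $h^{jk}(0,\cdot)=\delta^{jk}$ and $\Gamma^i_{jk}(0,\cdot)=0$; the assumed $C^{2,3}_{t,x}$-regularity of $r$ then yields $\|h^{jk}(t,\cdot)-\delta^{jk}\|_{C^1}+\|\Gamma^i_{jk}(t,\cdot)\|_{C^0}\to 0$ as $t\to 0$, uniformly in $x$. Choosing $t_0$ small enough that these quantities fall below a fixed threshold, the perturbation $-(L_hX-AX,AX)_{L^2}$ and all lower-order drift terms can be absorbed into a fraction of the leading coercive term $-\|X\|_5^2$, leaving genuine coercivity
\begin{equation*}
2\langle\mathcal{A}(t,X),X\rangle+\|\mathcal{B}(t,X)\|^2_{L_2(U,H^1(\mathcal{O}_0))}\leq -c\|X\|_5^2+C\|X\|_{H^1(\mathcal{O}_0)}^2+f(t),
\end{equation*}
where the $\mathcal{B}$-term is handled by \eqref{02062230-2}. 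The cut-off $g_N$ is exactly what makes the nonlinearity absorbable, since $g_N(\|\nabla\overline{X}\|_{L^2})\|\nabla\overline{X}\|_{L^2}\leq N$. Condition (C2) is obtained by the same perturbation argument applied to the difference of two solutions, together with the Lipschitz hypotheses \eqref{02062230-1} and \eqref{02062230-3} on $\sigma$.

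Finally, with (C1)--(C4) in force, the abstract theorem of \cite{PWZZ} furnishes a unique variational solution $X$ on $[0,t_0]$; transforming back via the inverse Piola map $\overline{\cdot}$ in \eqref{01270556} and invoking the equivalence of solutions to be established in Section 4.2 produces the unique solution of \eqref{gns} on $[0,t_0]$. The threshold defining $t_0$ depends only on the modulus of continuity of $t\mapsto h^{jk}(t,\cdot)$ and $t\mapsto\Gamma^i_{jk}(t,\cdot)$ near $t=0$, and, importantly, not on $N$; this uniformity is what will make the later uniform-in-$N$ moment estimates meaningful.
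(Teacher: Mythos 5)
Your overall strategy coincides with the paper's: transform \eqref{gns} to the reference domain via the Piola map, obtain an SPDE in the Gelfand triplet $H^2\cap H^1_{0,\sigma}(\mathcal{O}_0)\hookrightarrow H^1_{0,\sigma}(\mathcal{O}_0)\hookrightarrow L^2_\sigma(\mathcal{O}_0)$, verify the hypotheses of the abstract theorem in \cite{PWZZ}, and choose $t_0$ by a perturbation argument exploiting that the transformed coefficients are Euclidean at $t=0$; your smallness estimate is in substance the paper's inequality \eqref{01292226}. However, there is a genuine gap: you have misidentified what the conditions (C1)--(C4) of \cite{PWZZ} are. They are not hemicontinuity, monotonicity, coercivity and growth (those are the conditions the paper labels (H1)--(H5) and verifies in Subsection \ref{section4}); they are structural conditions on the \emph{nonhomogeneous} variational setting: equivalence of the time-dependent inner products $(\cdot,\cdot)_{1,t}$, existence of a self-adjoint bounded operator $\Phi(t)$ with $\|x\|_{1,t}^2-\|x\|_{1,0}^2=\int_0^t (x,\Phi(s)x)_{1,0}\,ds$, and boundedness and bijectivity on $H^2\cap H^1_{0,\sigma}(\mathcal{O}_0)$ of the operator $\iota_t^*=A_0^{-1}P_0L_h^\#$ defined in \eqref{ltddy}. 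Your proposal never introduces this time-dependent inner-product structure, so the theorem you invoke cannot be applied as stated: its It\^o formula (Lemma \ref{ito2}) requires the operator $\Phi(t)$, and without the family $(\cdot,\cdot)_{1,t}$ the hypotheses (H2), (H3), (H5), which are formulated in the $\|\cdot\|_{1,t}$ norms, do not even make sense.

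Relatedly, the invertibility of $P_0L_h^\#:H^2\cap H^1_{0,\sigma}(\mathcal{O}_0)\to L^2_\sigma(\mathcal{O}_0)$ (Remark \ref{remark2}, a consequence of (C4)) is exactly what the equivalence-of-solutions argument needs: the passage between moving test functions and variational test functions goes through $\phi=-(P_0L_h^\#)^{-1}(P_0h\psi)$ as in \eqref{01301049}, and you defer this step to ``Section 4.2'' without supplying a substitute. Your perturbation bound does contain the required ingredient ($L_h^\#$ close to $\Delta$ in $\mathcal{L}(H^2, L^2)$), but you deploy it only to absorb error terms in the coercivity estimate, never to conclude that $\iota_t^*$ is an isomorphism with $\|\iota_{-t}^*\|_{\mathcal{L}(H^2\cap H^1_{0,\sigma}(\mathcal{O}_0))}\leq 2$ --- which is where the paper's $t_0$ actually originates. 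Finally, your transformed drift is not quite correct: solving for $dv$ from the $h$-weighted weak formulation forces the operators $(P_0h)^{-1}$ in front of every drift term, as in \eqref{01301253}; the leading operator is $(P_0h)^{-1}P_0L_h^\#$, not $PL_h$. These omissions are fixable precisely by carrying out the content of the paper's Subsections 4.1--4.2, but as written your argument has a hole between the transformation and the application of the abstract theorem.
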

\begin{proof}
	We will apply Theorem 2.8 in a general setting in \cite{PWZZ}. The proof is again divided into three parts, which are given in the next three subsections. We first verify conditions (\hyperlink{C1}{C1})-(\hyperlink{C4}{C4}) as required in Section 2.2 of \cite{PWZZ}, in which $t_0>0$ is chosen to satisfy (\hyperlink{C4}{C4}). Then, through the Piola transformation we show that the existence and uniqueness of solutions to equation \eqref{gns} is equivalent to the existence and uniqueness of solutions to a SPDE of the form \eqref{spde1} on $[0,t_0]$. The proof of this correspondence is given in Subsection \ref{section3}. In Subsection \ref{section4}, we show that the SPDE \eqref{spde1} is well-posed by further verifying conditions (H1)-(H5) required in Section 2.3 of \cite{PWZZ}.
\end{proof}
\subsection{Verification of the conditions (\hyperref[C1]{C1})-(\hyperref[C4]{C4})}\label{section2}

{Let us} consider the Gelfand triplet $H^2\cap H_{0,\sigma}^1(\mathcal{O}_0)\hookrightarrow H_{0,\sigma}^1(\mathcal{O}_0)\hookrightarrow L^2_\sigma(\mathcal{O}_0)$. In this subsection, we will {build} a family of inner products on the space $H^1_{0,\sigma}(\mathcal{O}_0)$ and verify conditions (\hyperlink{C1}{C1})-(\hyperlink{C4}{C4}) needed in Section 2.2 of \cite{PWZZ}. These conditions are important in the next subsection as well, when we establish the equivalence between the equation \eqref{gns} and a SPDE on a fixed domain.

Due to \eqref{01270721}, we can define a family of inner products $\{(\cdot,\cdot)_{1,t}\}_{t\in[0,T]}$ on the space $H^1_{0,\sigma}(\mathcal{O}_0)$ given by
\begin{align}\label{checkc1}
	(v,w)_{1,t}\ \dot{=}\ \int_{\mathcal{O}_0} h^{kl}h_{ij}
	\big(\nabla_k v\big)_i (\nabla_l w)_j dy=(\nabla\bar{v},\nabla\bar{w})_{L^2(\mathcal{O}_t)}, \quad v, w\in H_{0,\sigma}^1(\mathcal{O}_0).
\end{align}
%Recalling the definition of the inverse of Piola transformation in \eqref{01270556}, the above family of inner products satisfies
%\begin{align}\label{01290317}
%(v,w)_{1,t}=(\nabla\bar{v},\nabla\bar{w})_{L^2(\mathcal{O}_t)}.
%\end{align}
Now define the  elliptic operator ${L}^\#_h$ such that for any $\mathbb{R}^2$-valued function $v$ on $\mathcal{O}_0$,
\begin{align}\label{02021304}
	[{L}^\#_hv]_l\dot{=} h_{kl}(L_h v)_k=\frac{\partial r_m}{\partial y_l}\frac{\partial}{\partial y_n}\Big(\frac{\partial}{\partial y_j}\big(v_p\frac{\partial r_m}{\partial y_p}\big) h^{jn}\Big).
\end{align}
We stress that the elliptic operators ${L}^\#_h$ are time-dependent because both functions $h,r$ depend on the time variable.
Then for the Leray projection $P_0$ on $\mathcal{O}_0$, we can define the operator $\widehat{P_0L^\#_{h}}:H_{0,\sigma}^1(\mathcal{O}_0)\rightarrow H^{-1}_{\sigma}(\mathcal{O}_0)$, such that for any $v,w\in  H_{0,\sigma}^1(\mathcal{O}_0)$,
\begin{align}\label{02252057}
	_{H_\sigma^{-1}(\mathcal{O}_0)}\langle \widehat{P_0L^\#_h}v, w \rangle_{H_{0,\sigma}^{1}(\mathcal{O}_{0})}=-\int_{\mathcal{O}_0}h^{kl}h_{ij}(\nabla_k v)_i (\nabla_l w)_j dy=-(v,w)_{1,t}.
\end{align}
Note that when $v\in H^2\cap H_{0,\sigma}^1(\mathcal{O}_0)$, using the integration by parts formula, {we can see that $\widehat{P_0L^\#_{h}}v = P_0L^\#_{h}v$. Hereafter, we will no longer distinguish between $\widehat{P_0L^\#_{h}}$ and $P_0L^\#_{h}$}. In particular, it follows from \eqref{02252057} that $-P_0L_h^\#$ is the Riesz dual operator for the Hilbert space $H_{0,\sigma}^1(\mathcal{O}_0)$ equipped with the inner product $(\cdot,\cdot)_{1,t}$, which is an isomorphism. In particular, corresponding to the case $t=0$, the Stokes operator on $\mathcal{O}_0$, defined by $A_0=P_0\Delta:H_{0,\sigma}^1(\mathcal{O}_0)\rightarrow H_{\sigma}^{-1}(\mathcal{O}_0)$, satisfies for any $v,w\in H_{0,\sigma}^1(\mathcal{O}_0)$,
\begin{align*}
	_{H_\sigma^{-1}(\mathcal{O}_0)}\langle A_0v,w \rangle_{H_{0,\sigma}^{1}(\mathcal{O}_{0})}=-(\nabla v,\nabla w)_{L^2(\mathcal{O}_0)}.
\end{align*}
Namely $-A_0$: $H_{0,\sigma}^1(\mathcal{O}_0)\rightarrow H^{-1}_{\sigma}(\mathcal{O}_0)$ is the Riesz dual operator for the inner product $(\cdot,\cdot)_{1,0}$, which is an isomorphism. Therefore, we can define the family of time-dependent  operators
\begin{equation}\label{ltddy}
	\iota_t^*=A_0^{-1}P_0L_h^\# :H_{0,\sigma}^1(\mathcal{O}_0)\rightarrow H_{0,\sigma}^1(\mathcal{O}_0),
\end{equation}
which is also an isomorphism on $H_{0,\sigma}^1(\mathcal{O}_0)$ and satisfies
\begin{align*}
	(\iota_t^*v,w)_{1,0}=(v,w)_{1,t}.
\end{align*}

As required in Section 2.3 of \cite{PWZZ}, we need to verify the following conditions for $(\cdot,\cdot)_{1,t}$ and $\iota_t^*$:
\begin{itemize}
	\item[\hypertarget{C1}{{\bf (C1)}}] There exists a constant $c_1\geq 1$ such that for any $t\in[0,T]$, the norm ${\|\cdot\|_{1,t}}$ generated by the inner product $(\cdot,\cdot)_{1,t}$ satisfies,
	\begin{align*}
		{\frac{1}{c_1}\|x\|_{1,0}\leq \|x\|_{1,t}\leq c_1\|x\|_{1,0},\text{ for any $x\in H_{0,\sigma}^1(\mathcal{O}_0)$}. }%\label{A1}\tag{A1}
	\end{align*}
	\item[\hypertarget{C2}{{\bf (C2)}}] There exists a family of self-adjoint bounded operators $\{\Phi(t)\}_{t\in[0,T]}$ on ${H_{0,\sigma}^1(\mathcal{O}_0)}$ such that
	\begin{align*}%\label{A2}
		\int_{0}^{T}\|\Phi(t)\|_{\mathcal{L}({H_{0,\sigma}^1(\mathcal{O}_0)})}dt<\infty\  ,%\tag{A2}
	\end{align*}
	and for any $t\in[0,T]$,
	\begin{align}\label{08201354}
		\|x\|_{1,t}^2-\|x\|_{1,0}^2= \int_{0}^{t}(x,\Phi(s)x)_{{1,0}}ds. %\tag{A3}
	\end{align}
	\item[\hypertarget{C3}{{\bf (C3)}}]
	The operator $\iota_t^*$ is  a bounded operator on $H^2\cap H_{0,\sigma}^1(\mathcal{O}_0)$ as well, and there exists a constant $c_2>0$ such that
	\begin{align*}%\label{A4}
		\|\iota_t^* x\|_{{H^2(\mathcal{O}_0)}}\leq c_2\|x\|_{{H^2(\mathcal{O}_0)}}\text{ for any $x\in {H^2\cap H_{0,\sigma}^1(\mathcal{O}_0)},  t\in[0,T]$}. %\tag{A4}
	\end{align*}
	\item[\hypertarget{C4}{{\bf (C4)}}] $\iota_t^*$ is bijective from $H^2\cap H_{0,\sigma}^1(\mathcal{O}_0)$ to $H^2\cap H_{0,\sigma}^1(\mathcal{O}_0)$ and there exists a constant $c_3>0$ such that
	\begin{align*}
		\text{  $\|\iota_{-t}^*x\|_{H^2(\mathcal{O}_0)}\leq c_3\|x\|_{H^2(\mathcal{O}_0)}$ for any $x\in {H^2\cap H_{0,\sigma}^1(\mathcal{O}_0)}$, $t\in[0,T]$,} %\tag{A5}\label{A5}
	\end{align*}
where $\iota_{-t}^*$ stands for the inverse of $\iota_{t}^*$.
\end{itemize}
\noindent
{\bf{Verification of (\hyperlink{C1}{C1}):}} Using the assumptions about the family of domains $\{\mathcal{O}_t\}_{t\in[0,T]}$, combining the Rayleigh-Faber-Krahn inequality and the relation \eqref{checkc1}, it is easy to verify that condition (\hyperlink{C1}{C1}) holds.
\vskip 0.3cm
\noindent{\bf{Verification of (\hyperlink{C2}{C2}):}} We define the time derivative operatior of $L_h^\#$ as $\Psi(t)$, that is,
\begin{equation}\label{PSIdefinition}
	\Psi(t)=\frac{d}{dt}L_h^\#(t):[H_{0}^1(\mathcal{O}_0)]^2\rightarrow [H^{-1}(\mathcal{O}_0)]^2.
\end{equation}
For an $\mathbb{R}^2$-valued function $v$ on $\mathcal{O}_0$,  $\Psi(t)v$ has the following explicit expression:
\begin{align*}
	\big(\Psi(t)v\big)_l =&\ \frac{\partial^2 r_m}{\partial y_l\partial t}\frac{\partial}{\partial y_n}\Big(\frac{\partial}{\partial y_j}\big(v_\alpha\frac{\partial r_m}{\partial y_\alpha}\big)h^{jn}\Big)+\frac{\partial r_m}{\partial y_l}\frac{\partial}{\partial y_n}\Big(\frac{\partial}{\partial y_j}\big(v_\alpha\frac{\partial r_m}{\partial y_\alpha}\big)\frac{\partial}{\partial t}h^{jn}\Big)\\&+\frac{\partial r_m}{\partial y_l}\frac{\partial}{\partial y_n}\Big(\frac{\partial}{\partial y_j}\big(v_\alpha\frac{\partial^2 r_m}{\partial y_\alpha\partial t}\big)h^{jn}\Big).
\end{align*}
{As in} (\ref{02252057}), we first define the operator
$P_0\Psi(t):H_{0,\sigma}^1(\mathcal{O}_0)\rightarrow H_{\sigma}^{-1}(\mathcal{O}_0)$, such that for any $v, w\in H_{0,\sigma}^1(\mathcal{O}_0)$,
\begin{align*}
	_{H^{-1}_\sigma(\mathcal{O}_0)}\langle P_0\Psi(t)v,w \rangle_{H^{1}_{0,\sigma}(\mathcal{O}_0)}=-\frac{d}{dt}\int_{\mathcal{O}_0} h^{kl}h_{ij}(\nabla_k v)_i(\nabla_l w)_j dy=-\frac{d}{dt}(v,w)_{1,t}.
\end{align*}
Then we define the operator $\Phi(t)=A_0^{-1}P_0\Psi(t):H_{0,\sigma}^1(\mathcal{O}_0)\rightarrow H_{0,\sigma}^1(\mathcal{O}_0)$, such that $\forall v,w\in H_{0,\sigma}^1(\mathcal{O}_0)$,
\begin{align*}
	&(\Phi(t)v,w)_{1,0}\\=&\int_{\mathcal{O}_0}\big(\frac{\partial v_i}{\partial y_k}\frac{\partial^2 r_\alpha}{\partial y_i\partial t}+v_m\frac{\partial^3 r_\alpha}{\partial t\partial y_k\partial y_m}\big)h^{kl}\big(\frac{\partial w_j}{\partial y_l}\frac{\partial r_\alpha}{\partial y_j}+w_n\frac{\partial^2 r_\alpha}{\partial y_l\partial y_n}\big)dy\\&+\int_{\mathcal{O}_0}\big(\frac{\partial v_i}{\partial y_k}\frac{\partial r_\alpha}{\partial y_i}+v_m\frac{\partial^2 r_\alpha}{\partial y_k\partial y_m}\big)h^{kl}\big(\frac{\partial w_j}{\partial y_l}\frac{\partial^2 r_\alpha}{\partial y_j\partial t}+w_n\frac{\partial^3 r_\alpha}{\partial y_l\partial y_n\partial t}\big)dy\\&+\int_{\mathcal{O}_0}\big(\frac{\partial v_i}{\partial y_k}\frac{\partial r_\alpha}{\partial y_i}+v_m\frac{\partial^2 r_\alpha}{\partial y_k\partial y_m}\big)\frac{\partial h^{kl}}{\partial t}\big(\frac{\partial w_j}{\partial y_l}\frac{\partial r_\alpha}{\partial y_j}+w_n\frac{\partial^2 r_\alpha}{\partial y_l\partial y_n}\big)dy\\=&\frac{d}{dt}(v,w)_{1,t}.
\end{align*}
This {also} shows that $\Phi(t)$ is a self-adjoint operator. Moreover, by the assumptions on the family of domains $\{\mathcal{O}_t\}_{t\in[0,T]}$, one can show that there exists a constant $C>0$ such that for every $t \in [0,T]$,
\begin{align*}
	\|\Phi(t)\|_{\mathcal{L}\big(H_{0,\sigma}^1(\mathcal{O}_0)\big)}\leq C.
\end{align*}
Consequently, we confirm the validity of (\hyperlink{C2}{C2}).
\vskip 0.3cm
\noindent{\bf{Verification of (\hyperlink{C3}{C3}) and (\hyperlink{C4}{C4}):}} For $g \in H^2\cap H_{0,\sigma}^1(\mathcal{O}_0)$, let $u=A_0^{-1}P_0L^\#_hg$. Then verifying condition (\hyperlink{C3}{C3}) reduces to obtaining an $L^2$ estimate for the solution of the following Stokes problem:
\begin{equation}
	\begin{cases}\label{01272301}
		\Delta u+\nabla p =P_0L^\#_hg, x\in \mathcal{O}_0,\\
		\mathrm{div}\ u=0, x\in \mathcal{O}_0,\\
		u=0, x\in \partial\mathcal{O}_0.
	\end{cases}
\end{equation}
In fact, by the well-posedness of the steady Stokes problem and the assumptions on the moving domains, we can find a constant $C>0$ such that
\begin{align}\label{01272255}
	\|\iota_t^* g\|_{{H^2(\mathcal{O}_0)}}=\|u\|_{H^2(\mathcal{O}_0)}\leq C_0\|P_0L_h^\#g\|_{L^2(\mathcal{O}_0)}\leq C\|g\|_{H^2(\mathcal{O}_0)}.
\end{align}
Thus, (\hyperlink{C3}{C3}) is verified. Now we consider condition (\hyperlink{C4}{C4}). Unlike (\hyperlink{C3}{C3}), if we replace $\Delta$ in \eqref{01272301} by $L_h^\#$, we no longer have an estimate similar to \eqref{01272255}. To the best of our knowledge, the existing literature on $L^2$ estimates for generalized Stokes problems typically assumes that the elliptic operator satisfies a uniform ellipticity condition, or takes a form that is significantly different from $L_h^\#$ (see e.g. \cite{BS,M} and references therein). In this case, we use the perturbation theory of operators to ensure that (\hyperlink{C4}{C4}) holds on a short time interval. We observe that for any $v\in {H^2\cap H_{0,\sigma}^1(\mathcal{O}_0)}$,
\begin{align*}
	[(L^\#_h-\Delta)v]_l=\ &\frac{\partial r_m}{\partial y_l}\frac{\partial}{\partial y_n}\big(\frac{\partial}{\partial y_j}(v_\alpha\frac{\partial r_m}{\partial y_\alpha})h^{jn}\big)-\delta_{jn}\frac{\partial}{\partial y_n}(\frac{\partial}{\partial y_j}v_l)\\=&\big(\frac{\partial r_m}{\partial y_l}-\delta_{ml}\big)\big[\frac{\partial}{\partial y_n}\big(\frac{\partial}{\partial y_j}(v_\alpha\frac{\partial r_m}{\partial y_\alpha})h^{jn}\big)\big]+\frac{\partial}{\partial y_n}\big[\frac{\partial}{\partial y_j}\big(v_\alpha\frac{\partial r_l}{\partial y_\alpha}(h^{jn}-\delta_{jn})\big)\big]\\&+\frac{\partial^2}{\partial y_j^2}\big(v_\alpha(\frac{\partial r_l}{\partial y_\alpha}-\delta_{\alpha l})\big)\\=&\mathrm{I+II+III}.
\end{align*}
From the regularity assumptions on $r$ and $\bar{r}$, we can infer that there exist a time $t_0>0$ and a constant $C_0 > 0$ such that for all $t\in [0, t_0]$,
$$\|\mathrm{I}\|_{L^2(\mathcal{O}_0)}+\|\mathrm{II}\|_{L^2(\mathcal{O}_0)}+\|\mathrm{III}\|_{L^2(\mathcal{O}_0)}\leq \frac{1}{2C_0}\|v\|_{H^2(\mathcal{O}_0)}. $$
It follows that
\begin{align}\label{01292226}
	\nonumber&\|(\iota_t^*-\iota_0^*)v\|_{{H^2(\mathcal{O}_0)}}\leq \|\big(A_0^{-1}P_0L_h^\#-A_0^{-1}P_0\Delta_0)v\|_{H^2(\mathcal{O}_0)} \\ &\leq\  C_0\|(P_0L_h^\#-P_0\Delta_0)v\|_{L^2_\sigma(\mathcal{O}_0)}\leq \frac{1}{2}\|v\|_{{H^2(\mathcal{O}_0)}}.
\end{align}
Since $\iota_0^*$ is the identity map, by perturbation theory, we know that $\iota_t^*$ is an isomorphism from $H^2\cap H_{0,\sigma}^1(\mathcal{O}_0)$ to $H^2\cap H_{0,\sigma}^1(\mathcal{O}_0)$, and that $\|\iota_{-t}^*v\|_{{H^2(\mathcal{O}_0)}}\leq 2\|v\|_{{H^2(\mathcal{O}_0)}}$. We complete the verification of (\hyperlink{C1}{C1})-(\hyperlink{C4}{C4}) on a small time interval $[0,t_0]$.

\begin{remark}\label{remark2}
	Due to the fact that $\iota_t^*$ is an isomorphism on $H^2\cap H_{0,\sigma}^1(\mathcal{O}_0)$ and $A_0$ is an isomorphism from $H^2\cap H_{0,\sigma}^1(\mathcal{O}_0)$ to $L_\sigma^2(\mathcal{O}_0)$, it follows that for any $t \in [0, t_0]$, the operator
	$$P_0L_h^\#:H^2\cap H_{0,\sigma}^1(\mathcal{O}_0)\rightarrow L^2_\sigma(\mathcal{O}_0)$$
	is also an isomorphism, and the operator norms of both itself and its inverse admit uniform upper bounds with respect to $t \in [0, t_0]$.
\end{remark}

\subsection{Equivalence of solutions}\label{section3}

Recall the modified Navier-Stokes equation, where $g_N(r)=1\wedge \frac{N}{r}$,
\vskip -0.5cm
\begin{numcases}{}\label{4.2.0}
	\nonumber\dot{u}(t,x)+g_N(\|\nabla u\|_{L^2(\mathcal{O}_t)})(u\cdot\nabla) u(t,x)+\nabla p=\Delta u(t,x)+\sigma\big(t,u(t)\big)\dot{W},\ (t,x)\in \mathcal{D}_T,\\
	\nonumber\mathrm{div}\ u(t,x)=0,\ (t,x)\in \mathcal{D}_T,\\
	\nonumber u(t,x)=0,\ \ \ \ \ \ (t,x)\in \partial\mathcal{D}_T,\\
	u(0,y)=u_0(y), \ \ \ \ \ y\in \mathcal{O}_0.
\end{numcases}

In this subsection, we will show that the solution of the globally modified Navier-Stokes equation \eqref{4.2.0} can be transformed into a solution of a SPDE with nonhomogeneous monotonicity in a variational setting and vice versa. Consider the Gelfand triplet $H^2\cap H_{0,\sigma}^1(\mathcal{O}_0)\hookrightarrow H_{0,\sigma}^1(\mathcal{O}_0)\hookrightarrow L^2_\sigma(\mathcal{O}_0)$. From Subsection \ref{section2}, we know that condition (\hyperlink{C4}{C4}) is only verified on the small time interval $[0,t_0]$. Therefore, we will focus on the equivalence of solutions on the interval $[0,t_0]$. Before we state the equivalence of the solutions, we need some notations. For a function  $v:\mathcal{O}_0\rightarrow \mathbb{R}^2$, recall the operator $N(v,v)$ defined in Section 1:
\begin{align}\label{4.2.1}
	 N(v,v)^i \dot{=}{v}_j({\nabla_j {v}})_i.
\end{align}
Let  $M$ be an operator defined by
$$\big(Mv\big)_i=\frac{\partial \bar{r}_k}{\partial t}(\nabla_kv)_i+\frac{\partial \bar{r}_i}{\partial x_k}\frac{\partial^2 r_k}{\partial t\partial y_j}v_j$$

Define a family of equivalent inner products $\{(\cdot,\cdot)_{0,t}\}_{t\in[0,T]}$ on $L^2_\sigma(\mathcal{O}_0)$ such that for any $v,w\in L^2_\sigma(\mathcal{O}_0)$,
	\begin{align}\label{njddy}
		(v,w)_{0,t}\dot{=}\int_{\mathcal{O}_0}h_{ij}(t,y)v_iw_j dy.
	\end{align}
	In particular, for the inverse Piola transformations of $v$ and $w$, which are denoted by $\bar{v}$ and $\bar{w}$ respectively, we have
	\begin{align}\label{03302301}
		(\bar{v},\bar{w})_{L^2(\mathcal{O}_t)}= (v,w)_{0,t}.
	\end{align}
	%We can prove that for the Hilbert space $L^2_\sigma(\mathcal{O}_0)$ and the family of inner products $\{(\cdot,\cdot)_{0,t}\}_{t\in[0,T]}$, the conditions (\hyperlink{C1}{C1}) and (\hyperlink{C2}{C2}) proposed in \cite{PWZZ} are satisfied. The corresponding Lax-Milgram operator $\iota_t^*$ is given by
Using the Leray projection $P_0$, we can define the operator $P_0h: L^2(\mathcal{O}_0)\rightarrow L^2_\sigma(\mathcal{O}_0)$ so that
\begin{align*}
		(v, P_0hv)_{L^2(\mathcal{O}_0)}=(v,hv)_{L^2(\mathcal{O}_0)}=(v,v)_{0,t}.
	\end{align*}
It is easy to see that there exists a constant $C>0$ independent of $t$ such that
	\begin{align}\label{01310039}
		\|P_0hv\|_{L^2(\mathcal{O}_0)}\in\big[\frac{1}{C}\|v\|_{L^2(\mathcal{O}_0)},C\|v\|_{L^2(\mathcal{O}_0)}\big].
	\end{align}
Moreover, $P_0h|_{L_\sigma^2(\mathcal{O}_0)}: L_\sigma^2(\mathcal{O}_0)\rightarrow L_\sigma^2(\mathcal{O}_0)$ is a strictly positive definite isomorphism.
	So $\left(P_0h|_{L_\sigma^2(\mathcal{O}_0)}\right)^{-1}$ is also a strictly positive definite operator, and $\|\left(P_0h|_{L_\sigma^2(\mathcal{O}_0)}\right)^{-1}\|_{\mathcal{L}(L^2_\sigma(\mathcal{O}_0))}$ is uniformly bounded with respect to $t$.
The time derivative operator of $P_0h|_{L_\sigma^2(\mathcal{O}_0)}$, denoted by $\Phi_0(t): L^2_\sigma(\mathcal{O}_0)\rightarrow L^2_\sigma(\mathcal{O}_0)$,  is given by:
	\begin{align}\label{01292005}
		v\in L^2_\sigma(\mathcal{O}_0) \rightarrow P_0\frac{\partial h_{\cdot j}}{\partial t}v_j \in L^2_\sigma(\mathcal{O}_0).
	\end{align}
To simplify the notation, in the sequel we will write $(P_0h)^{-1}$ instead of $\left(P_0h|_{L_\sigma^2(\mathcal{O}_0)}\right)^{-1}$. We stress that with this notation, $(P_0h)^{-1}(P_0h)\not =I$.
	\begin{proposition}\label{5.1}
		The globally modified Navier-Stokes equation \eqref{4.2.0} on $\mathcal{D}_{t_0}$ is equivalent to the following SPDE in the Gelfand triplet $H^2\cap H_{0,\sigma}^1(\mathcal{O}_0)\hookrightarrow H_{0,\sigma}^1(\mathcal{O}_0)\hookrightarrow L^2_\sigma(\mathcal{O}_0)$:
			\begin{numcases}{}
				\label{01301253} dv(t)=(P_0h)^{-1}(P_0L_h^\#v(t))dt-g_N(\|v\|_{1,t})(P_0h)^{-1}(P_0h)N(v,v)(t)dt\\\ \ \ \ \ \ \ \ \ \ -(P_0h)^{-1}(P_0h)Mv(t)dt+\widetilde{\sigma}(t,v(t))dW_t,\ t\in[0,t_0]\nonumber\\
				v(0)=v_0\in H^1_{0,\sigma}(\mathcal{O}_0).\nonumber
			\end{numcases}
		where $W=\{ \beta^k \}_{k \geq 1}$ is the same $l^2$ cylindrical Wiener process as in equation \eqref{ns}.\par
		%the operator $M$ is defined as in \eqref{01290341}.
	\end{proposition}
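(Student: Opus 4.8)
The plan is to realise a bijection between solutions of the modified equation \eqref{4.2.0} on the moving domain and solutions of the fixed-domain SPDE \eqref{01301253}, implemented by the Piola transformation $\widetilde{\cdot}$ and its inverse $\overline{\cdot}$, and then to check that the two defining identities are transforms of one another, term by term. First I would record that $u\mapsto\widetilde u =: v$ is a linear isomorphism carrying $C\big([0,t_0],H^1_{0,\sigma}(\mathcal{O}_\cdot)\big)\cap L^2\big([0,t_0],H^2\cap H^1_{0,\sigma}(\mathcal{O}_\cdot)\big)$ onto the corresponding spaces over $\mathcal{O}_0$, with the integrability bounds of Definition \ref{ws3}(i) preserved by the uniform norm equivalences coming from the $C^{2,3}$-regularity of $r$ (and from Remark \ref{remark2} for the $H^2$-level drift). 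Predictability transfers for free, since by definition predictability of $u$ means predictability of $\widetilde u$. This reduces the proposition to the assertion that, for $v=\widetilde u$, the weak identity \eqref{ws2'} holds for all $\varphi\in C^2_{c,\sigma}(\mathcal{D}_T)$ if and only if the variational form of \eqref{01301253} holds when tested against $\psi:=\widetilde\varphi\in C^2_{c,\sigma}([0,t_0]\times\mathcal{O}_0)$.

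For the forward direction I would substitute $u=\overline v$, $\varphi=\overline\psi$ into \eqref{ws2'} and transform each term using the identities of Section \ref{sec:preliminaries} together with the volume-preservation \eqref{M}. Concretely, the endpoint terms and the stochastic term become $(v(t),\psi(t))_{0,t}$, $(v_0,\psi(0))_{0,0}$ and $\int_0^t(\widetilde\sigma\,dW,\psi)_{0,s}$ through \eqref{03302301}; the viscous term becomes $\int_0^t(P_0L_h^\#v,\psi)_{L^2(\mathcal{O}_0)}\,ds$ via $(\Delta u,\varphi)_{L^2(\mathcal{O}_s)}=-(v,\psi)_{1,s}$ from \eqref{checkc1} and the identity \eqref{02252057}; and the cut-off convection term becomes $-\int_0^t g_N(\|v\|_{1,s})(N(v,v),\psi)_{0,s}\,ds$ after integrating by parts (using $\mathrm{div}\,u=0$ and $u|_{\partial\mathcal{O}_s}=0$) and applying \eqref{01291431}, the cut-off factor matching $g_N(\|v\|_{1,s})$ by \eqref{checkc1}. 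The only term demanding real work is $\int_0^t(u,\varphi')_{L^2(\mathcal{O}_s)}\,ds$: differentiating $\varphi=\overline\psi$ in time by the transport formula \eqref{01290152} (with the roles of $u,\widetilde u$ played by $\varphi,\psi$) splits it as $(v,\dot\psi)_{0,s}$ plus three lower-order terms carrying $\partial_t\bar r$ and $\partial_t^2 r$.

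On the other side I would apply the It\^o product rule to $t\mapsto (v(t),\psi(t))_{0,t}=(P_0h(t)v(t),\psi(t))_{L^2(\mathcal{O}_0)}$, inserting \eqref{01301253} for $dv$. Because the inner product is itself time dependent, this produces exactly $\int_0^t(\Phi_0(s)v,\psi)_{L^2}\,ds$ from \eqref{01292005}, the term $\int_0^t(v,\dot\psi)_{0,s}\,ds$, and the drift and diffusion contributions, with no extra bracket term since $h,\psi$ are smooth. Using $(P_0h)(P_0h)^{-1}=I$ on $L^2_\sigma(\mathcal{O}_0)$ and the mapping properties in Remark \ref{remark2}, pairing in $(\cdot,\psi)_{0,s}$ collapses the factors $(P_0h)^{-1}(P_0h)$ acting on the (generally non-solenoidal) fields $N(v,v)$ and $Mv$, so that the drift reproduces precisely the viscous term $(P_0L_h^\#v,\psi)_{L^2}$ and the convection term $-g_N(\|v\|_{1,s})(N(v,v),\psi)_{0,s}$ found above, together with $-\int_0^t(Mv,\psi)_{0,s}\,ds$.

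Matching the two computations cancels the viscous, convection, stochastic and $(v,\dot\psi)_{0,s}$ contributions, reducing the whole equivalence to a single pointwise identity: the three correction terms generated by $\partial_t\overline\psi$ must equal $(\Phi_0(s)v,\psi)_{L^2}-(Mv,\psi)_{0,s}$. I expect this identity to be the main obstacle; it is the adjoint, integrated-by-parts counterpart of the relation $\widetilde{\partial_t u}=\partial_t v+Mv$ implicit in \eqref{01290152}, and proving it is pure differential-geometric bookkeeping --- expanding the covariant derivative \eqref{01270740} that enters $M$, disentangling the time derivatives of $r$, $\bar r$ and $h$ (the latter producing the $\Phi_0$ piece), and repeatedly using $r(t,\bar r(t,x))=x$ together with \eqref{M}. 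Finally, since $\psi$ ranges over a dense subset of the test space and the pairings are nondegenerate, the tested identity upgrades to the required $L^2_\sigma(\mathcal{O}_0)$-valued form of \eqref{01301253}; as every transformation step is invertible, reversing them yields the converse implication and completes the proof.
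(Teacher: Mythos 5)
Your proposal is sound in its main lines, and its first half coincides with the paper's: both transform \eqref{ws2'} term by term via the Piola identities \eqref{01290341}--\eqref{01291452}, and the ``main obstacle'' you isolate is exactly the paper's relation \eqref{01312053}, $(v,M\psi)_{0,s}=(\Phi_0(s)v,\psi)_{L^2}-(Mv,\psi)_{0,s}$, which the paper obtains from $(v,Mv)_{0,s}=\tfrac12(\Phi_0(s)v,v)$ by integration by parts and polarization. Where you genuinely diverge is the second half. The paper never pairs the SPDE \eqref{01301253} against $\psi$ in the $(\cdot,\cdot)_{0,s}$ products; instead it changes the test function, solving $-P_0L_h^\#\phi(t)=P_0h\psi(t)$ (this is where Remark \ref{remark2}, i.e.\ conditions (\hyperlink{C3}{C3})--(\hyperlink{C4}{C4}), enters), so that the $L^2$-tested identity \eqref{01300248} becomes the $H^1$-variational identity \eqref{01301150} tested against $\phi$ in the $(\cdot,\cdot)_{1,t}$ pairings, extends it to all $\phi\in C^1([0,t_0],H^2\cap H^1_{0,\sigma}(\mathcal{O}_0))$ by density, and for the converse invokes Theorem 2.5 of \cite{PWZZ}. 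Your route --- It\^o product rule on $(v(t),\psi(t))_{0,t}$ plus density in $\psi$ --- is more elementary and avoids the test-function substitution altogether; your collapsing of the factors $(P_0h)^{-1}(P_0h)$ under the $(\cdot,\psi)_{0,s}$ pairing is correct, since $\psi$ is solenoidal and $P_0h$ is self-adjoint and invertible on $L^2_\sigma(\mathcal{O}_0)$.

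There is, however, one point you must make explicit, and it is the reason the paper's proof is engineered the way it is. What your argument produces (and consumes) is the $L^2_\sigma(\mathcal{O}_0)$-valued integral-equation form of \eqref{01301253}. The proposition feeds into Subsection \ref{section4}, where Theorem 2.8 of \cite{PWZZ} is applied; the solution concept there is the variational one attached to the triplet $H^2\cap H^1_{0,\sigma}(\mathcal{O}_0)\hookrightarrow H^1_{0,\sigma}(\mathcal{O}_0)\hookrightarrow L^2_\sigma(\mathcal{O}_0)$, formulated through the time-dependent inner products $(\cdot,\cdot)_{1,t}$ and the correction operator $\Phi(t)$ --- precisely the form \eqref{01301150}. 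In this nonhomogeneous setting the equivalence between ``the equation holds in $L^2_\sigma(\mathcal{O}_0)$'' and the variational formulation is not automatic; it is the content of Theorem 2.5 of \cite{PWZZ} (which itself requires (\hyperlink{C1}{C1})--(\hyperlink{C4}{C4})). So either cite that theorem to pass from your $L^2_\sigma$-valued identity to the variational one, or rephrase the target accordingly; as written you have proved equivalence with an a priori different formulation of \eqref{01301253} than the one the rest of the paper uses. Separately, your final ``density upgrade'' deserves one line of detail: set $w(t)=v_0+\int_0^t(\mathrm{drift})\,ds+\int_0^t\widetilde\sigma\,dW_s$, apply the same product rule to $(w(t),\psi(t))_{0,t}$, subtract, and close with Gronwall after taking the supremum over $\psi$ in the unit ball of $L^2_\sigma(\mathcal{O}_0)$, which yields $v\equiv w$.
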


\begin{proof}
	
Let $u^N$ be a solution to \eqref{4.2.0}. Then for any $\varphi\in C_{c,\sigma}^2({\mathcal{D}_{t_0}})$, $(u^N,\varphi)$ satisfies \eqref{ws2'}. Denote $(\widetilde{u^N},\widetilde{\varphi})$ by $(v,\psi)$. We will show that $v$ is a solution to the SPDE (\ref{01301253}) {in} the Gelfand triplet $H^2\cap H_{0,\sigma}^1(\mathcal{O}_0)\hookrightarrow H_{0,\sigma}^1(\mathcal{O}_0)\hookrightarrow L^2_\sigma(\mathcal{O}_0)$.
	
	By \eqref{01290152} and \eqref{03302301}, we have
	\begin{align}\label{01290341}
		\int_{0}^{t}\int_{\mathcal{O}_s}u^N(s)\cdot\varphi'(s)dxds
		&=\int_{0}^{t}\big(v(s),\widetilde{\varphi'}(s)\big)_{0,s}ds\nonumber\\
		&=\int_{0}^{t}\big(v(s),\frac{\partial \psi}{\partial s}(s)\big)_{0,s} ds+\int_{0}^{t}\big(v(s),M\psi(s)\big)_{0,s} ds,
	\end{align}
	where $\big(M\psi\big)_i=\frac{\partial \bar{r}_k}{\partial t}(\nabla_k\psi)_i+\frac{\partial \bar{r}_i}{\partial x_k}\frac{\partial^2 r_k}{\partial t\partial y_j}\psi_j$.
	
	For the stochastic term, by \eqref{03302301} , we have
	\begin{align*}
		\int_{0}^{t}\int_{\mathcal{O}_s}\sigma_k(s,u^N(s))\varphi(s,x)dxd\beta^k_s=\int_{0}^{t}\big(\widetilde{\sigma}_k(s,v(s)),\psi(s)\big)_{0,s}d\beta^k_s,
	\end{align*}
	where {$\widetilde{\sigma}_k(s,v)\ \dot{=}\ \widetilde{\sigma_k(s,\bar{v})}$}. %is defined in \eqref{03302222}.
	Similarly, it follows from \eqref{01270721} and \eqref{01291431} that
	\begin{align}
		\label{01291451}&\int_{0}^{t_0}\int_{{\mathcal{O}_s}}\Delta u^N(s,x)\varphi(s,x)dxds=\int_{0}^{t_0}\big(L_hv,\psi(s)\big)_{0,s}ds,\\
		&\nonumber\int_0^{t_0} \int_{{\mathcal{O}_s}} g_N(\|u^N\|_{H^1(\mathcal{O}_s)}) (u^N\cdot\nabla\phi)(s,x)\cdot u^N(s,x) dxds\\=\ &\int_{0}^{t_0}\int_{{\mathcal{O}_0}}g_N(\|v\|_{1,s})v_j(\nabla_j \psi)_k v_l h_{kl}dyds=\int_{0}^{t_0}g_N(\|v\|_{1,s})\big(N(v,\psi), v\big)_{0,s}ds.	\label{01291452}
	\end{align}
	In particular, for {operators} $\Phi_0$ defined in \eqref{01292005} and $M$ {appeared} in \eqref{01290341}, applying the integration by parts formula, we find the following relation: for any $v\in H_{0,\sigma}^1\big(\mathcal{O}_0\big)$,
	\begin{align*}
		(v, Mv)_{0,s}=\frac{1}{2}\big(\Phi_0(s)v,v\big),
	\end{align*}
	which implies that for any $v,w\in H_{0,\sigma}^1\big(\mathcal{O}_0\big)$,
	\begin{align}\label{01312053}
		(v, Mw)_{0,s}=\big(\Phi_0(s)v,w\big)_{L^2}-(w, Mv)_{0,s}.
	\end{align}
	Combining \eqref{01312053} with \eqref{01290341}-\eqref{01291452}, we get
	\begin{align}\label{01300248}
		\nonumber&\big(v(t),\psi(t)\big)_{0,t}-\big(v(0),\psi(0)\big)_{0,0}\\\nonumber=&\ \int_{0}^{t}\big(v(s),\frac{\partial\psi}{\partial s}(s)\big)_{0,s} ds-\int_{0}^{t}\big(Mv(s),\psi(s)\big)_{0,s} ds+\int_{0}^{t}\big(\Phi_0v(s),\psi(s)\big)_{L^2}ds+\int_{0}^{t}\big(L_hv(s),\psi(s)\big)_{0,s}ds\\&\ +\int_{0}^{t}g_N(\|v\|_{1,s})\big(N(v,\psi)(s),v(s)\big)_{0,s}ds+\sum_{k=1}^{\infty}\int_{0}^{t}\big(\widetilde{\sigma}_k(s,v(s)),\psi(s)\big)_{0,s}d\beta^k_s.
	\end{align}
	
	Let $\psi\in C^2\big([0,t_0],C_{c,\sigma}^2({\mathcal{O}_0})\big)$ be arbitrary. By Remark \ref{remark2}, we have that for $t \in [0,t_0]$, the operator $P_0L^\#_h:H^2\cap H_{0,\sigma}^1(\mathcal{O}_0)\rightarrow L^2_\sigma(\mathcal{O}_0)$ is an isomorphism. Based on the assumptions on the moving domains, we deduce that
	$$P_0L_h^\#\in C^1\big([0,t_0],\mathcal{L}(H^2\cap H^1_{0,\sigma}(\mathcal{O}_0), L_\sigma^2(\mathcal{O}_0))\big),\frac{d}{dt}P_0L_h^\#=P_0\Psi(t),$$
	{where $\Psi$ is defined as in \eqref{PSIdefinition}.}
	Moreover, there exists $\phi(\cdot)\in C^1\big([0,t_0],H^2\cap H_{0,\sigma}^1(\mathcal{O}_0)\big)$ such that for any $t \in [0,t_0]$,
	$$-P_0L_h^\#\phi(t)=P_0h\psi(t).$$
	Then
	\begin{align}\label{01302124}
		&\big(v(t),\psi(t)\big)_{0,t}=-\int_{{\mathcal{O}_0}}v_i (P_0L_h^\#\phi(t))_i dy=-\int_{{\mathcal{O}_0}} v_i(L_h^\#\phi(t))_i dy=\big(v(t),\phi(t)\big)_{1,t}.
	\end{align}
	%Similarly, we can show
	In particular, corresponding to the case $t=0$,
	\begin{align*}
		\big(v(0),\psi(0)\big)_{0,0}=\big(v(0),\phi(0)\big)_{1,0}.
	\end{align*}
	Since
	\begin{align}\label{01301049}
		\psi(t)=-(P_0h)^{-1}(P_0L_h^\#\phi)(t),
	\end{align}
	it follows that
	\begin{align}\label{01302123}
		\frac{\partial \psi}{\partial s}(s)&=(P_0h)^{-1}P_0\frac{\partial h}{\partial s}(P_0h)^{-1}\big(P_0L_h^\#\phi(s)\big)-(P_0h)^{-1}\big(P_0\Psi(s)\phi(s)\big)\\&\ \ \ -(P_0h)^{-1}\big({P}_0L_h^\#\frac{\partial \phi}{\partial s}\big)(s)\nonumber.
	\end{align}
	Therefore, we have
	\begin{align}
		\nonumber&\big(v(s),\frac{\partial \psi}{\partial s}(s)\big)_{0,s}=\big((P_0h)v(s),\frac{\partial \psi}{\partial s}(s)\big)_{0,0}\\=&\ \nonumber\big(v(s),P_0\frac{\partial h}{\partial s}(P_0h)^{-1}(P_0L_h^\#\phi(s))\big)_{0,0} -\big(v(s),P_0\Psi(s)\phi(s)\big)_{0,0}+\big(v(s),\frac{\partial \phi}{\partial s}(s)\big)_{1,s}\\=&\ -\big(v(s),\Phi_0(s)\psi(s)\big)_{0,0}+\big(v(s),\Phi(s)\phi(s)\big)_{1,0}+\big(v(s),\frac{\partial \phi}{\partial s}(s)\big)_{1,s}.
	\end{align}
	As for the remaining terms, we first observe that
	\begin{align}\label{03302238}
		\nonumber\big(L_hv(s),\psi(s)\big)_{0,s}&=-\big(P_0h\big(L_hv(s)\big),(P_0h)^{-1}P_0{L}_h^\#\phi(s)\big)_{0,0}
		\\&=-\big((P_0h)^{-1}P_0h(L_hv(s)),P_0L_h^\#\phi(s)\big)_{0,0}\nonumber\\
		&=-\big((P_0h)^{-1}P_0L_h^\#v(s),P_0L_h^\#\phi(s)\big)_{0,0}.
	\end{align}
	Then using a similar argument, we infer that
	\begin{align}
		\label{01302127-1}&\big(Mv(s),\psi(s)\big)_{0,s}=-\big((P_0h)^{-1}P_0hMv(s),P_0L_h^\#\phi(s)\big)_{0,0},\\& \label{01302127-2}\big(\widetilde{\sigma}_k(s,v(s)),\psi(s)\big)_{0,s}=\big(\widetilde{\sigma}_k(s,v(s)),\phi(s)\big)_{1,s},\\&
		\label{01302127-3}\big(N(v,v)(s),\psi(s)\big)_{0,s}=-\big((P_0h)^{-1}P_0h N(v,v)(s), P_0L_h^\# \phi(s)\big)_{0,0}.
	\end{align}
	Combining \eqref{01302124}-\eqref{01302127-3}, we obtain
	\begin{align}\label{01301150}
		&\big(v(t),\phi(t)\big)_{1,t}-\big(v_0,\phi(0)\big)_{1,0}=\int_{0}^{t}\big(v(s),\frac{\partial\phi}{\partial s}\big)_{1,s}ds+\int_{0}^{t}\big(v(s),\Phi(s)\phi(s)\big)_{1,0}ds\\&\nonumber-\int_{0}^{t}\big((P_0h)^{-1}P_0L_h^\#v,P_0L_h^\#\phi(s)\big)_{0,0} ds+\int_{0}^{t}\big((P_0h)^{-1}P_0hMv,P_0L_h^\#\phi(s)\big)_{0,0} ds\\&+\int_{0}^{t}g_N(\|v\|_{1,s})\big((P_0h)^{-1}P_0hN(v,v),P_0L_h^\#\phi(s)\big)_{0,0} ds+\sum_{k=1}^{\infty}\int_{0}^{t}\big(\widetilde{\sigma}_k(s,v(s)),\phi(s)\big)_{1,s}d\beta_s^k\nonumber,
	\end{align}
	for any $\phi$ obtained through \eqref{01301049}. On the other hand, for any $\phi\in C^1\big([0,t_0],H^2\cap H^1_{0,\sigma}(\mathcal{O}_0)\big)$, \begin{align*}
		\psi\dot{=}-(P_0h)^{-1}(P_0L_h^\#\phi)\in C^1\big([0,t_0],L_\sigma^2(\mathcal{O}_0)\big).
	\end{align*}
	Take a sequence $\psi_n\in C^2\big([0,t_0],C_{c,\sigma}^2(\mathcal{O}_0)\big)$ such that $\psi_n$ converges to $\psi$ in $H^1\big([0,t_0],L_\sigma^2(\mathcal{O}_0)\big)$.
	Then we have
	$$\ {\phi}_n\dot{=}-(P_0L_h^\#)^{-1}P_0h\psi_n \rightarrow \phi \ \text{in} \ H^1\big([0,t_0],H^2\cap H^1_{0,\sigma}(\mathcal{O}_0)\big).   $$
	Obviously, $\phi_n$ is obtained through \eqref{01301049}. By taking $\phi = \phi_n$ in \eqref{01301150} and letting $n\rightarrow\infty$, we can deduce that \eqref{01301150} holds for any $\phi\in C^1([0,t_0],H^2\cap H_{0,\sigma}^1(\mathcal{O}_0))$. Therefore, $v$ satisfies the following equation {in} the Gelfand triplet $H^2\cap H_{0,\sigma}^1(\mathcal{O}_0)\hookrightarrow H_{0,\sigma}^1(\mathcal{O}_0)\hookrightarrow L^2_\sigma(\mathcal{O}_0)$:
	\begin{equation}
		\begin{cases}
			d \iota_t^*v(t)=\tilde{A}_1(t,v(t))dt+\tilde{A}_2(t,v(t))dt+\tilde{A}_3(t,v(t))dt+\iota_t^*\widetilde{\sigma}(t,v(t))dW_t+\Phi(t)v(t)dt,\ t\in[0,t_0],\\
			v(0)=v_0\in H_{0,\sigma}^1(\mathcal{O}_0),
		\end{cases}
	\end{equation}
	where the operator $\iota_t^*$ is defined in \eqref{ltddy}, $W=\{ \beta^k \}_{k \geq 1}$ represents an $l^2$-cylindrical Wiener process, $\tilde{A}_1(\cdot, \cdot), \tilde{A}_2(\cdot, \cdot), \tilde{A}_3(\cdot, \cdot): [0, t_0] \times H^2\cap H_{0,\sigma}^1(\mathcal{O}_0) \rightarrow L^2_\sigma(\mathcal{O}_0)$ are {respectively} given by
	\begin{align}
		\label{01310005-1}\langle\tilde{A}_1(t,v),w\rangle&=-\int_{\mathcal{O}_0}(P_0h)^{-1}(P_0L^\#_hv)(y)\cdot (P_0L^\#_hw)(y)dy,\\ \label{01310005-2}\langle\tilde{A}_2(t,v),w\rangle&=\int_{\mathcal{O}_0}(P_0h)^{-1}(P_0h)g_N(\|v\|_{1,t})N(v,v)(y)\cdot (P_0L^\#_hw)(y)dy,\\
		\label{01310005-3}\langle\tilde{A}_3(t,v),w\rangle&=\int_{\mathcal{O}_0}(P_0h)^{-1}(P_0h)Mv(y)\cdot (P_0L^\#_hw)(y)dy,
	\end{align}
	for $v,w\in H^2\cap H_{0,\sigma}^1(\mathcal{O}_0)$.
	{Let us} denote $\iota_{-\cdot}^*\tilde{A}_i(\cdot, v)$ by $A_i(\cdot, v)$ for $v \in H^2\cap H_{0,\sigma}^1(\mathcal{O}_0) $, $i=1,2,3$. From the definition of $\iota_{-\cdot}^*$, we see that
\begin{align*}
&A_1(t,v(t))=(P_0h)^{-1}(P_0L_h^\#v(t)), \quad A_2(t,v(t))=-g_N(\|v\|_{1,t})(P_0h)^{-1}(P_0h)N(v,v)(t),\\&A_3(t,v(t))=-(P_0h)^{-1}(P_0h)Mv(t)
\end{align*}
We conclude that $v$ is the solution of the SPDE \eqref{01301253} in the Gelfand triplet $H^2\cap H_{0,\sigma}^1(\mathcal{O}_0)\hookrightarrow H_{0,\sigma}^1(\mathcal{O}_0)\hookrightarrow L^2_\sigma(\mathcal{O}_0)$.
	%\begin{equation}\label{01301253}
	%\begin{cases}
	%dv(t)={A}_1(t,v(t))dt+{A}_2(t,v(t))dt+{A}_3(t,v(t))dt+\widetilde{\sigma}(t,v(t))dW_t,\ t\in[0,t_0]\\
	%v(0)=v_0\in H^1_{0,\sigma}(\mathcal{O}_0).
	%\end{cases}
	%\end{equation}
	%It is worth noting that $A_i(t,v(t))$, $i=1,2,3$, as elements of $L^2_\sigma(\mathcal{O}_0)$, satisfy
	%\begin{align*}
	%&A_1(t,v(t))=(P_0h)^{-1}(P_0L_h^\#v(t)), \quad A_2(t,v(t))=-g_N(\|v\|_{1,t})(P_0h)^{-1}(P_0h)N(v,v)(t),\\&A_3(t,v(t))=-(P_0h)^{-1}(P_0h)Mv(t).
	%\end{align*}
	Thus, we have shown that any solution $u^N$ to equation \eqref{gns} can be transformed into a solution $v$ of the SPDE \eqref{01301253} by $v\dot{=}\widetilde{u^N}$.
	
	Conversely, if $v$ is a variational solution to equation \eqref{01301253} and let $u^N \dot{=}\bar{v}$. By Theorem 2.5 in \cite{PWZZ}, we obtain that for any $\phi\in L^2\big([0,t_0],H^2\cap H^1_{0,\sigma}(\mathcal{O}_0)\big)\cap C^1\big([0,t_0],H^1_{0,\sigma}(\mathcal{O}_0)\big)$,
	\begin{align*}
		&\big(v(t),\phi(t)\big)_{1,t}-\big(v_0,\phi(0)\big)_{1,0}\\=&\int_{0}^{t}\langle \tilde{A}_1(s,v(s)),\phi(s)\rangle ds+\int_{0}^{t}\langle \tilde{A}_2(s,v(s)),\phi(s)\rangle ds+\int_{0}^{t}\langle \tilde{A}_3(s,v(s)),\phi(s)\rangle ds\\&\ +\int_{0}^{t}\big(v(s),\phi'(s)\big)_{1,s}ds+\int_{0}^{t}\big(\widetilde{\sigma}(s,v(s))dW_s,\phi(s)\big)_{1,s}+\int_{0}^{t}\big(\Phi(s)v(s),\phi(s)\big)_{1,0}ds.
	\end{align*}
	In particular, take any $\varphi\in C^2_{c,\sigma}(\mathcal{D}_{t_0})$ and denote $\widetilde{\varphi}\in C_{c,\sigma}^2([0,t_0]\times\mathcal{O}_0)$ by $\psi$. Let $\phi=-(P_0L^\#_h)^{-1}(P_0h\psi)$. It is easy to see that
	$\phi\in C\big([0,t_0],H^2\cap H^1_{0,\sigma}(\mathcal{O}_0)\big)\cap C^1\big([0,t_0],H^1_{0,\sigma}(\mathcal{O}_0)\big)$.
	Combining \eqref{01302124}-\eqref{01302127-3}, we infer that $(v, \psi)$ satisfies \eqref{01300248}.
	%the following equation:
	%\begin{align*}
	%&\big(v(t),\psi(t)\big)_{0,t}-\big(v_0,\psi(0)\big)_{0,0}\\=&\int_{0}^{t}\big(v(s),\frac{\partial\psi}{\partial s}\big)_{0,s}ds+\int_{0}^{t}\big(v(s),M\psi(s)\big)_{0,s}ds-\int_{0}^{t}\big(v(s),\psi(s)\big)_{1,s}ds\\&\ -\int_{0}^{t}g_N(\|v\|_{1,s})\big(N(v,v),\psi\big)_{0,s}ds+\sum_{k=1}^{\infty}\int_{0}^{t}\big(\widetilde{\sigma}_k\big(s,v(s)\big),\psi(s)\big)_{0,s}d\beta^k_s.
	%\end{align*}
	Next, combining \eqref{03302301} with \eqref{01290341}-\eqref{01312053}, we can see that \eqref{ws2'} holds for $\varphi\in C^2_{c,\sigma}(\mathcal{D}_{t_0})$. The other conditions (i) and (ii) in Definition \ref{ws3} follow {easily}. Thus, the stochastic process $u^N$ on moving domains, obtained by $u^N \dot{=}\bar{v}$, where $v$ is the solution of \eqref{01301253}, is a solution of \eqref{gns} on $[0,t_0]$.
	%\begin{align}
	%&\nonumber \   \   \  \int_{\mathcal{O}_t}u(t,x)\cdot\varphi(t,x)dx-\int_{\mathcal{O}_0} u(0,x)\cdot\varphi(0,x)dx  \\ &\nonumber=\int_{0}^{t}\int_{{\mathcal{O}_s}} \Delta u(s,x)\cdot\varphi(s,x)dxds+\int_{0}^{t}\int_{{\mathcal{O}_s}}  u(s,x)\cdot{\varphi}'(s,x)dxds \\
	%&\ \ + \int_{0}^{t}\int_{{\mathcal{O}_s}}g_N(\|u(s)\|_{H^1_{0,\sigma}(\mathcal{O}_t)}) (u\cdot\nabla \varphi)(s,x) u(s,x)dxds+ \sum_{k=1}^{\infty}\int_{0}^{t}\int_{\mathcal{O}_s} \sigma_k(s, u(s))\cdot\varphi(s)dxd\beta^k_s.\nonumber
	%\end{align}
\end{proof}

\subsection{Well-posedness of the modified stochastic Navier-Stokes equation}\label{section4}

In this subsection, we will complete the proof of the well-posedness of the modified stochastic Navier-Stokes equation. Since we have established the correspondence between the solutions to equations \eqref{gns} and \eqref{01301253} on $[0,t_0]$, to obtain the well-posedness of the globally modified Navier-Stokes equation \eqref{gns}, it suffices to establish the well-posedness of the SPDE \eqref{01301253}. {In order to apply Theorem 2.8 in \cite{PWZZ} to get the well-posedness of the SPDE \eqref{01301253}, it remains to verify the conditions (H1)-(H5) below for $A_i$, $i=1,2,3$ and $\widetilde{\sigma}$ defined in \eqref{01301253}. As in the previous subsection, we consider the equation on the time interval $[0,t_0]$.  Denote $\tilde{A}(t, \cdot)\dot{=} \sum_{i=1}^3\iota_{t}^*A_i(t, \cdot) =\sum_{i=1}^3\tilde{A}_i(t, \cdot)$. we will verify that there exists $\tilde{f}\in L^1([0,t_0],\mathbb{R}_+)$ such that the following  holds for $\tilde{A}$ and $\widetilde{\sigma}$.
	%As in the previous subsection, {\color{red}we consider the equation on} the time interval $[0,t_0]$.
	\begin{itemize}
		\item[\hypertarget{H1}{{\bf (H1)}}] For a.e. $t \in [0,t_0]$, the map $\lambda \in \mathbb{R} \rightarrow \langle \tilde{A}(t, u+\lambda v),  x \rangle \in \mathbb{R}$ is continuous, for any $u, v, x \in H^2\cap H^1_{0,\sigma}(\mathcal{O}_0)$.
		\item[\hypertarget{H2}{{\bf (H2)}}] There exists a locally bounded measurable function $\rho : H^2\cap H^1_{0,\sigma}(\mathcal{O}_0) \rightarrow \mathbb{R}$, nonnegative constants $\gamma$ and $C$ such that for a.e. $t \in [0, t_0]$, the following inequalities hold for any $u, v \in H^2\cap H^1_{0,\sigma}(\mathcal{O}_0)$,
		\begin{eqnarray*}
			&     & 2\langle \tilde{A}(t, u)- \tilde{A}(t, v), u-v \rangle + \sum_{k=1}^{\infty}\|\widetilde{\sigma}_k(t,u)-\widetilde{\sigma}_k(t,v)\|^2_{1,t} \\
			&\leq &  [\tilde{f}(t)+ \rho(v)]\| u -v \|_{1,t}^2,\\
			&     &  |\rho(u)| \leq C(1+ \| u\|^2_{H^2(\mathcal{O}_0)})(1+ \|u \|_{1,0}^\gamma).
		\end{eqnarray*}
		\item[\hypertarget{H3}{{\bf (H3)}}] There exists a constant $c > 0$ such that for a.e. $t \in [0, t_0]$, the following inequality holds for any $u \in H^2\cap H^1_{0,\sigma}(\mathcal{O}_0)$,
		\[ 2 \langle  \tilde{A}(t, u), u \rangle + \sum_{k=1}^{\infty}\|\widetilde{\sigma}_k(t,u)\|_{1,t}^2
		\leq \tilde{f}(t)(1 + \|u \|_{1,t}^2) - c\| u \|^2_{H^2(\mathcal{O}_0)}. \]
		\item[\hypertarget{H4}{{\bf (H4)}}] There exist nonnegative constants $\beta$ and $C$ such that for a.e. $t \in [0, t_0]$, we have for any $u \in H^2\cap H^1_{0,\sigma}(\mathcal{O}_0)$,
		\[ \| \tilde{A}(t, u)\|^2_{L^2(\mathcal{O}_0)} \leq (\tilde{f}(t) + C\|u\|^2_{H^2(\mathcal{O}_0)})(1+ \|u\|_{1,0}^\beta).     \]
		\item[\hypertarget{H5}{{\bf (H5)}}] For a.e. $t \in [0, t_0]$, we have for any $u \in H^2\cap H^1_{0,\sigma}(\mathcal{O}_0)$,
		\[  \sum_{k=1}^{\infty}\|\widetilde{\sigma}_k(t,u)\|_{1,t}^2 \leq \tilde{f}(t)(1 + \|u \|_{1,t}^2).  \]
	\end{itemize}
}
We first note that \eqref{02062230-1} and \eqref{02062230-2} imply that for any $u, v \in H^2\cap H^1_{0,\sigma}(\mathcal{O}_0)$ and a.e. $t \in [0,T]$,
\begin{align}
	&\label{02062230-1'}\sum_{k=1}^{\infty}\|\widetilde{\sigma}_k(t,u)-\widetilde{\sigma}_k(t,v)\|^2_{1,t}\leq f(t)\|u-v\|^2_{1,t},\\\label{02062230-2'}&\sum_{k=1}^{\infty}\|\widetilde{\sigma}_k(t,u)\|_{1,t}^2\leq f(t)\big(1+\|u\|^2_{1,t}\big).
\end{align}
{Thus we only need to verify (\hyperlink{H1}{H1})-(\hyperlink{H4}{H4}) for $\tilde{A}(t, \cdot)$.}\\ %$\iota_{t}^*A_i(t, \cdot)=\tilde{A}_i(t, \cdot)$, $i=1,2,3$.\\
{\bf{Verification of (\hyperlink{H1}{H1}):}} For operators $\tilde{A}_1$ and $\tilde{A}_3$, the condition (H1) follows directly from their linearity. As for $\tilde{A}_2$, since for any $v, w \in H^2\cap H^1_{0,\sigma}(\mathcal{O}_0)$,
\begin{align*}
	\langle\tilde{A}_2(t,v),w\rangle&=\int_{\mathcal{O}_0}(P_0h)^{-1}(P_0h)g_N(\|v\|_{1,t})N(v,v)(y)\cdot (P_0L^\#_hw)(y)dy,
\end{align*}
by the bilinearity of $N(v,v)$ and the continuity of $g_N(\|v\|_{1,t})$ with respect to $v$ in $H^2\cap H^1_{0,\sigma}(\mathcal{O}_0)$, the condition (H1) is also satisfied.\\
{\bf{Verification of (\hyperlink{H2}{H2}):}}
We consider the operators $\tilde{A}_i$, $i=1,2,3$ {separately}. For any $v,w\in H^2\cap H^1_{0,\sigma}(\mathcal{O}_0)$,
\begin{align*}
	\langle\tilde{A}_1(t,v)-\tilde{A}_1(t,w),v-w\rangle=-\int_{\mathcal{O}_0}(P_0h)^{-1}\big(P_0L^\#_h(v-w)\big)(y)\cdot \big(P_0L^\#_h(v-w)\big)(y)dy,
\end{align*}
Since $(P_0h)^{-1}$ is a strictly positive definite operator on $L^2_\sigma(\mathcal{O}_0)$, and by \eqref{01310039}, there exists a constant ${\tilde{c}}>0$ independent of $t$ such that
\begin{align*}
	\langle\tilde{A}_1(t,v)-\tilde{A}_1(t,w),v-w\rangle\leq -{\tilde c}\|P_0L_h^\#(v-w)\|_{L^2(\mathcal{O}_0)}^2.
\end{align*}
Moreover, due to Remark \ref{remark2}, we have that
\begin{align}\label{I1I1}
	\langle\tilde{A}_1(t,v)-\tilde{A}_1(t,w),v-w\rangle\leq -{c}\|v-w\|_{H^2(\mathcal{O}_0)}^2.
\end{align}
For $\tilde{A}_3$, {we have}
\begin{align}\label{I3I3}
	&\langle\tilde{A}_3(t,v)-\tilde{A}_3(t,w),v-w\rangle\\ \nonumber=&\int_{\mathcal{O}_0}(P_0h)^{-1}(P_0h)M(v-w)(y)\cdot (P_0L^\#_h(v-w))(y)dy\\\leq&\ \nonumber C\|v-w\|_{1,t}\|v-w\|_{H^2(\mathcal{O}_0)}\\\leq&\ \frac{c}{8}\|v-w\|^2_{H^2(\mathcal{O}_0)}+C\|v-w\|^2_{1,t}. \nonumber
\end{align}
As for $\tilde{A}_2$,
\begin{align*}
	&\langle\tilde{A}_2(t,v)-\tilde{A}_2(t,w),v-w\rangle\\=&\int_{\mathcal{O}_0}(P_0h)^{-1}(P_0h)\big(g_N(\|v\|_{1,t})N(v,v)(y)-g_N(\|w\|_{1,t})N(w,w)(y)\big)\cdot\big(P_0L^\#_h(v-w)\big)(y)dy\\=&\int_{\mathcal{O}_0}(P_0h)^{-1}(P_0h)g_N(\|v\|_{1,t})N(v-w,v)(y)\cdot\big(P_0L^\#_h(v-w)\big)(y)dy\\\ &+\int_{\mathcal{O}_0}(P_0h)^{-1}(P_0h)\big(g_N(\|v\|_{1,t})-g_N(\|w\|_{1,t})\big)N(w,v)(y)\cdot\big(P_0L^\#_h(v-w)\big)(y)dy\\\ &+\int_{\mathcal{O}_0}(P_0h)^{-1}(P_0h)g_N(\|w\|_{1,t})N(w,v-w)(y)\cdot\big(P_0L^\#_h(v-w)\big)(y)dy\\=&\mathrm{I+II+III}.
\end{align*}
From Lemma 2.1 in \cite{M}, {the following is valid},
\begin{itemize} {
	}    \item[(i)]: $0\leq \|v\|_{1,t}g_N(\|v\|_{1,t})\leq N$,
	\item[(ii)]: $|g_N(\|v\|_{1,t})-g_N(\|w\|_{1,t})|\leq \frac{1}{N}g_N(\|v\|_{1,t})g_N(\|w\|_{1,t})\|v-w\|_{1,t}$.
\end{itemize}
{Consequently}, we get
\begin{align}
	\label{I3I31}&\mathrm{I}\leq C\frac{N}{\|v\|_{1,t}}\|v-w\|_{1,t}^{\frac{1}{2}}\|v\|_{1,t}\|v-w\|_{H^2(\mathcal{O}_0)}^{\frac{3}{2}}\leq \frac{c}{8}\|v-w\|_{H^2(\mathcal{O}_0)}^2+CN^4\|v-w\|^2_{1,t}.\\\label{I3I32}&
	\mathrm{II}\leq\frac{C}{N}g_N(\|v\|_{1,t})g_N(\|w\|_{1,t})\|v-w\|_{1,t}\|v\|_{1,t}\|w\|_{1,t}^\frac{1}{2}\|w\|_{H^2(\mathcal{O}_0)}^\frac{1}{2}\|v-w\|_{H^2(\mathcal{O}_0)}\\& \ \ \leq \frac{c}{8}\|v-w\|_{H^2(\mathcal{O}_0)}^2+C\big(1+\|w\|_{H^2(\mathcal{O}_0)}\|w\|_{1,t}\big)\|v-w\|_{1,t}^2. \nonumber
	\\  \label{I3I33}&
	\mathrm{III}\leq Cg_N(\|w\|_{1,t})\|v-w\|_{1,t}\|w\|_{1,t}^\frac{1}{2}\|w\|_{H^2(\mathcal{O}_0)}^\frac{1}{2}\|v-w\|_{H^2(\mathcal{O}_0)}\\& \ \ \ \ \leq \frac{c}{8}\|v-w\|_{H^2(\mathcal{O}_0)}^2+C\big(1+\|w\|_{H^2(\mathcal{O}_0)}\|w\|_{1,t}\big)\|v-w\|_{1,t}^2. \nonumber
\end{align}
Combining with \eqref{I1I1}-\eqref{I3I33}, we finally obtain %and denoting $\tilde{A}=\tilde{A}_1+\tilde{A}_2+\tilde{A}_3$, we finally obtain
\begin{align}\label{I4I4}
	\langle \tilde{A}(t,v)-\tilde{A}(t,w), v-w\rangle\leq -\frac{c}{2}\|v-w\|^2_{H^2(\mathcal{O}_0)}+C_N\big(1+\|w\|_{H^2(\mathcal{O}_0)}\|w\|_{1,t}\big)\|v-w\|_{1,t}^2.
\end{align}\par
\noindent{\bf{Verification of (\hyperlink{H3}{H3}):}} Taking $w=0$ in \eqref{I4I4}, we have
\begin{align*}
	\langle \tilde{A}(t,v), v\rangle\leq-\frac{c}{2}\|v\|^2_{H^2(\mathcal{O}_0)}+C_N\|v\|_{1,t}^2.
\end{align*}\par
\noindent{\bf{Verification of (\hyperlink{H4}{H4}):}} By the definitions \eqref{01310005-1}-\eqref{01310005-3}, it easily follows that
$$\|\tilde{A}(t,v)\|_{L^2(\mathcal{O}_0)}^2\leq C(1+\|v\|^2_{H^2(\mathcal{O}_0)})(1+\|v\|^2_{1,t}).$$

The well-posedness of the globally modified Navier-Stokes equation \eqref{gns} on $[0,t_0]$ is thus proved.

\section{Well-posedness of the stochastic Navier-Stokes equation on  $[0,t_0]$}\label{section5}

In this section, we will establish the well-posedness of the two dimensional Navier-Stokes equation \eqref{ns} on the moving domain $\mathcal{D}_{t_0}$. We first prove the uniqueness of equation \eqref{ns}. To this end, we introduce the following lemma, which is a particular case of Theorem 2.5 in \cite{PWZZ}.
\begin{lemma}\label{ito2}
	Assume that $H$ is a separable Hilbert space with a family of equivalent inner products $\{(\cdot,\cdot)_{t}\}_{t\in[0,T]}$ satisfying conditions (\hyperlink{C1}{C1}) and (\hyperlink{C2}{C2}). Let  $X_0\in L^2(\Omega,\mathcal{F}_0,\mathbb{P},H)$, $Y\in L^1([0,T],H), \mathbb{P}-a.s.$  and  $Z\in L^2([0,T],L_2(U,H)), \mathbb{P}-a.s.$ both progressively measurable. If $X$  is an $H$-valued continuous process of the form,
	\begin{align}\label{01311944}
		X_t=X_0+\int_{0}^{t}Y_s ds+\int_{0}^{t}Z_sdW_s, \  \forall t\in[0,T].
	\end{align}
	Then $\mathbb{P}-a.s.$, the following equality holds
	\begin{align*}
		\nonumber|X_t|_t^2= \  &{|X_0|_0^2}+\int_{0}^{t}\big(2(Y_s, X_s)_s+\|Z_s\|^2_{L_2(U,H^s)}\big)ds+\int_{0}^{t}{\big( X_s,\Phi(s)X_s \big)_0}ds \\ &+2\int_{0}^{t}\big(X_s,Z_s dW_s\big)_s,
	\end{align*}
	for all $t\in [0,T]$, here {$| \cdot |_{t}$ represents the norm generated by the inner product $(\cdot, \cdot)_t$ and} $H^t$ represents the Hilbert space $H$ equipped with the inner product $(\cdot, \cdot)_t$, {$\Phi(s)$ is the operator defined in \eqref{08201354}.}
\end{lemma}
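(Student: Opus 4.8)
The plan is to recast the time-dependent inner products as a single deterministic operator path and then apply an It\^o product formula in the fixed Hilbert space $H^0\dot{=}(H,(\cdot,\cdot)_0)$. By (C1) each $(\cdot,\cdot)_t$ is a bounded, strictly positive, symmetric bilinear form on $H^0$, so there is a family $\{Q(t)\}_{t\in[0,T]}\subseteq\mathcal{L}(H^0)$ of self-adjoint positive operators with $(x,y)_t=(Q(t)x,y)_0$ for all $x,y\in H$. Condition (C2), after polarization of \eqref{08201354}, yields $(x,y)_t-(x,y)_0=\int_0^t(x,\Phi(s)y)_0\,ds$, that is, $Q(t)=I+\int_0^t\Phi(s)\,ds$ as a Bochner integral in $\mathcal{L}(H^0)$; in particular $t\mapsto Q(t)$ is absolutely continuous in operator norm with $\dot Q(t)=\Phi(t)$ and $\int_0^T\|\Phi(t)\|_{\mathcal{L}(H^0)}\,dt<\infty$. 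Writing $|X_t|_t^2=(Q(t)X_t,X_t)_0$ reduces the statement to an It\^o formula for the real-valued process $(Q(t)X_t,X_t)_0$, where $X$ is the continuous $H^0$-semimartingale given by \eqref{01311944}.

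First I would record the base case: the classical It\^o formula for $|X_t|_0^2$ in the fixed inner product $(\cdot,\cdot)_0$, valid for processes of the form \eqref{01311944} with $Y\in L^1([0,T],H)$ and $Z\in L^2([0,T],L_2(U,H))$ a.s. Granting the product rule $d(Q(t)X_t)=\Phi(t)X_t\,dt+Q(t)\,dX_t$ (legitimate because $Q$ has finite variation and hence no quadratic covariation with $X$), the three nonstandard terms then emerge upon expanding $d(Q(t)X_t,X_t)_0$: the term $(\Phi(t)X_t,X_t)_0\,dt=(X_t,\Phi(t)X_t)_0\,dt$ from $\dot Q=\Phi$ and self-adjointness; the symmetric pair $(Q(t)\,dX_t,X_t)_0+(Q(t)X_t,dX_t)_0=2(Y_t,X_t)_t\,dt+2(X_t,Z_t\,dW_t)_t$; and the cross-variation $d\langle QX,X\rangle_t=\sum_k(Q(t)Z_te_k,Z_te_k)_0\,dt=\sum_k|Z_te_k|_t^2\,dt=\|Z_t\|^2_{L_2(U,H^t)}\,dt$, where $\{e_k\}$ is an orthonormal basis of $U$. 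Summing these reproduces exactly the asserted identity.

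The one step that genuinely needs care is the justification of the product formula for $(Q(t)X_t,X_t)_0$ with a finite-variation operator coefficient, and with it the convergence of the stochastic integral term. I would proceed by a piecewise-constant approximation of $\Phi$: on a partition $0=s_0<\cdots<s_n=t$ telescope $|X_t|_t^2-|X_0|_0^2=\sum_i(|X_{s_i}|_{s_i}^2-|X_{s_{i-1}}|_{s_{i-1}}^2)$ and split each increment as $(|X_{s_i}|_{s_i}^2-|X_{s_i}|_{s_{i-1}}^2)+(|X_{s_i}|_{s_{i-1}}^2-|X_{s_{i-1}}|_{s_{i-1}}^2)$. The first bracket equals $\int_{s_{i-1}}^{s_i}(X_{s_i},\Phi(r)X_{s_i})_0\,dr$ by (C2) and, using continuity of $X$ in $H^0$ together with $\int_0^T\|\Phi(r)\|_{\mathcal{L}(H^0)}\,dr<\infty$, converges to $\int_0^t(X_r,\Phi(r)X_r)_0\,dr$ as the mesh vanishes. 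The second bracket is handled by the base-case It\^o formula in the frozen inner product $(\cdot,\cdot)_{s_{i-1}}$, producing Riemann-type sums whose drift and martingale parts must be shown to converge to $\int_0^t[2(Y_r,X_r)_r+\|Z_r\|^2_{L_2(U,H^r)}]\,dr$ and $2\int_0^t(X_r,Z_r\,dW_r)_r$. The delicate point is the martingale part: its convergence follows from the operator-norm estimate $|(x,y)_{s_{i-1}}-(x,y)_r|\le |x|_0\,|y|_0\int_{s_{i-1}}^{r}\|\Phi(u)\|_{\mathcal{L}(H^0)}\,du$ together with (C1) and a dominated-convergence argument for It\^o integrals, after a standard localization reducing the a.s. integrability of $Y$ and $Z$ to the bounded case. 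This is where the bulk of the analytic work lies.

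I note finally that, since the hypotheses match those of the general setting in \cite{PWZZ}, the statement is a particular case of Theorem 2.5 there, so one may alternatively invoke that theorem directly; the argument above indicates how its proof specializes to the present situation.
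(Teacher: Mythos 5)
Your proposal is correct in substance, but it takes a different route from the paper: the paper offers no argument at all for Lemma \ref{ito2} beyond observing that it is a particular case of Theorem 2.5 in \cite{PWZZ} (the very framework from which conditions (\hyperlink{C1}{C1})--(\hyperlink{C2}{C2}) are borrowed), whereas you reconstruct a self-contained proof. Your mechanism is the natural one: represent the family of inner products by a deterministic operator path $Q(t)$ with $(x,y)_t=(Q(t)x,y)_0$, use polarization of \eqref{08201354} and self-adjointness of $\Phi$ to get $Q(t)=I+\int_0^t\Phi(s)\,ds$ as a Bochner integral, and then expand $(Q(t)X_t,X_t)_0$ by an It\^o product rule, the finite variation of $Q$ ensuring the absence of an extra covariation between $Q$ and the martingale part of $X$; the three terms you identify (the $\Phi$-drift, the symmetric pair $2(Y_t,X_t)_t\,dt+2(X_t,Z_t\,dW_t)_t$, and the correction $\|Z_t\|^2_{L_2(U,H^t)}\,dt$) match the asserted identity exactly, and the telescoping-plus-frozen-inner-product discretization is the standard way to make this rigorous. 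What each approach buys: the paper's citation is economical and appropriate, since the lemma is genuinely a specialization of the general It\^o formula already established in \cite{PWZZ} under exactly these hypotheses; your argument has the merit of making transparent why (\hyperlink{C1}{C1}) (uniform equivalence, hence boundedness and invertibility of $Q(t)$ and control of all frozen norms by $|\cdot|_0$) and (\hyperlink{C2}{C2}) (absolute continuity of $t\mapsto Q(t)$ with integrable derivative $\Phi$) are precisely what is needed, at the cost that the genuinely delicate step --- passing to the limit in the martingale Riemann sums via localization and dominated convergence for stochastic integrals --- is only sketched rather than carried out. Since you also note at the end that one may simply invoke Theorem 2.5 of \cite{PWZZ}, your proposal subsumes the paper's treatment.
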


We remark  that the inner products $\{(\cdot,\cdot)_{0,t}\}_{t \in [0,T]}$ on $L^2_\sigma(\mathcal{O}_0)$, defined in \eqref{njddy}, {satisfy} conditions (\hyperlink{C1}{C1}) and (\hyperlink{C2}{C2}) with $H_{0,\sigma}^1(\mathcal{O}_0)$ replaced by $L_\sigma^2(\mathcal{O}_0)$ and $\Phi(s)$ replaced by $\Phi_0(s)$ (See (\ref{01292005}) for the definition of $\Phi_0(s)$).  As shown in Subsection \ref{section3}, for any solutions $u$ and $v$ of equation \eqref{ns}, their Piola transformations $\tilde{u}$ and $\tilde{v}$ satisfy \eqref{01301253}, with $g_N$ replaced by {the constant} 1 in the definition of $A_2$. Moreover, the equations hold in $L^2_\sigma(\mathcal{O}_0)$. Therefore, by Lemma \ref{ito2}, we have
\begin{align*}
	&|\tilde{u}(t)-\tilde{v}(t)|^2_{0,t}=-2\int_{0}^{t}\big(M(\tilde{u}-\tilde{v})(s),\tilde{u}-\tilde{v}\big)_{0,s}ds+2\int_{0}^{t}\big(L_h(\tilde{u}-\tilde{v})(s),\tilde{u}-\tilde{v}\big)_{0,s} ds\\&\ +2\sum_{k=1}^{\infty}\int_{0}^{t}\big(\widetilde{\sigma}_k(s,\tilde{u})-\widetilde{\sigma}_k(s,\tilde{u}),\tilde{u}-\tilde{v}\big)_{0,s} d\beta^k_s+\sum_{k=1}^{\infty}\int_{0}^{t}|\widetilde{\sigma}_k(s,\tilde{u})-\widetilde{\sigma}_k(s,\tilde{v})|_{0,s}^2ds\\&\ +\int_{0}^{t}\big(\Phi_0(s)(\tilde{u}-\tilde{v}),\tilde{u}-\tilde{v}\big)_{0,0}ds-2\int_{0}^{t}(N(\tilde{u},\tilde{u})-N(\tilde{v},\tilde{v}),\tilde{u}-\tilde{v})_{0,s} ds+|{u}_0-{v}_0|_{0,0}^2.
\end{align*}
Using \eqref{01291451}, \eqref{01291452} and \eqref{01312053} with $g_N$ replaced by 1, we deduce that
\begin{align}\label{uniq}
	&\|u(t)-v(t)\|^2_{L^2(\mathcal{O}_t)}\\
	=&\ \|u_0-v_0\|^2_{L^2(\mathcal{O}_0)}-2\int_{0}^{t}\|\nabla (u-v)(s)\|^2_{L^2(\mathcal{O}_s)}ds-2\int_{0}^{t}\big((u\cdot\nabla)u-(v\cdot\nabla)v,u-v\big)_{L^2(\mathcal{O}_s)}ds	\nonumber\\&\ +2\sum_{k=1}^{\infty}\int_{0}^{t}\big(\sigma_k(s,u(s))-\sigma_k(s,v(s)),u(s)-v(s)\big)_{L^2(\mathcal{O}_s)}d\beta^k_s \nonumber\\&\
	+\sum_{k=1}^{\infty}\int_{0}^{t}\|\sigma_k(s,u(s))-\sigma_k(s,v(s))\|^2_{L^2(\mathcal{O}_s)}ds\nonumber\nonumber\\\leq&\ \|u_0-v_0\|_{L^2(\mathcal{O}_0)}^2-\int_{0}^{t}\|\nabla(u-v)\|_{L^2(\mathcal{O}_s)}^2ds+C\int_{0}^{t}\|v\|^2_{L^4(\mathcal{O}_s)}\|u-v\|_{L^2(\mathcal{O}_s)}^2ds \nonumber\\&\
	+C\int_{0}^{t}f(s)\| u- v\|_{L^2(\mathcal{O}_s)}^2ds +2\sum_{k=1}^{\infty}\int_{0}^{t}\big(\sigma_k(s,u(s))-\sigma_k(s,v(s)),u(s)-v(s)\big)_{L^2(\mathcal{O}_s)}d\beta^k_s. \nonumber\
\end{align}
Applying It\^o's formula to $e^{-C\int_{0}^{t}(f(s)+\|v\|_{L^4(\mathcal{O}_s)}^2)ds}\|u(t)-v(t)\|^2_{L^2(\mathcal{O}_t)}$ and taking expectations, we get for any $t\leq t_0$,
$$E\big[\|u(t)-v(t)\|^2_{L^2(\mathcal{O}_t)}e^{-C\int_{0}^{t}(f(s)+\|v\|^2_{L^4(\mathcal{O}_s)})ds}\big]\leq \|u_0-v_0\|_{L^2(\mathcal{O}_0)}^2, $$
which implies the uniqueness of equation \eqref{ns} {when} $u_0=v_0$.

Next we prove the existence of the solution. By Proposition \ref{proposition4.1}, we know that for any ${N >0}$, the globally modified Navier-Stokes equation \eqref{gns} has a unique solution $u^N$ on $[0,t_0]$. For each ${N >0}$, define a stopping time
$$\tau^N=\text{inf}\{t\geq 0:\|u^N(t)\|_{H^1(\mathcal{O}_t)}>N\}\wedge t_0.$$
Then for any $t\in[0,t_0]$ and $\varphi\in C_{c,\sigma}^2({\mathcal{D}_{t_0}})$, we have
\begin{align*}
	&\nonumber \   \   \  \int_{\mathcal{O}_{t\wedge\tau^N}}u^N(t\wedge\tau^N,x)\cdot\varphi(t\wedge\tau^N,x)dx-\int_{\mathcal{O}_0} u(0,y)\cdot\varphi(0,y)dy  \\ &\nonumber=\int_{0}^{t\wedge\tau^N}\int_{{\mathcal{O}_s}} \Delta u^N(s,x)\cdot\varphi(s,x)dxds+\int_{0}^{t\wedge\tau^N}\int_{{\mathcal{O}_s}}  u^N(s,x)\cdot{\varphi}'(s,x)dxds \\
	&\ \ + \int_{0}^{t\wedge\tau^N}\int_{{\mathcal{O}_s}}(u^N\cdot\nabla \varphi)(s,x) u^N(s,x)dxds+ \sum_{k=1}^{\infty}\int_{0}^{t\wedge\tau^N}\int_{\mathcal{O}_s} \sigma_k(s, u^N(s))\cdot\varphi(s)dxd\beta^k_s.\nonumber
\end{align*}
For $M>N$, replacing $t$ with $t\wedge\tau^N\wedge\tau^M$ and following the same steps as in the proof of uniqueness, we can show that
$$u^N(\cdot\wedge\tau^N\wedge\tau^M)=u^M(\cdot\wedge\tau^N\wedge\tau^M),$$
which implies
$$\tau^N\leq\tau^M, \text{and\ } u^N(\cdot\wedge\tau^N)=u^M(\cdot\wedge\tau^N).$$
We claim that:
\begin{align}\label{02010004}
	{\mathbb{P}-a.s, \text{there exists}\ N_0(\omega) \ \text{such that for} \ N\geq N_0(\omega), \tau^N=\tau^{N_0(\omega)}=t_0.}
\end{align}
Based on \eqref{02010004} and the local compatibility of $\{u^N\}_{{N > 0}}$, we can define a $H_{0,\sigma}^1(\mathcal{O}_\cdot)$-valued predictable random process $u$ such that for any ${N > 0}$,
\begin{align}\label{02010030}
	u(t)\ \dot{=}\ u^N(t), t \in [0, \tau^N].
\end{align}
Moreover, we can infer from \eqref{02010004} that
$$u\in C\big([0,t_0],H_{0,\sigma}^1(\mathcal{O}_\cdot)\big)\cap L^2\big([0,t_0],H^2\cap{H}^1_{0,\sigma}(\mathcal{O}_\cdot)\big), \ \mathbb{P}-a.s.$$
{and} that $u$ is a solution to the equation \eqref{ns} on $[0,t_0]$.

Now, it remains to provide the proof of the claim \eqref{02010004}, which is based on the following estimate inspired by Theorem 2.1 in \cite{KV}.
\begin{lemma}\label{03311625}
	Define the function $\Theta:\mathbb{R}_+\rightarrow\mathbb{R}_+$ by
	$$\Theta(x)=\mathrm{log}\big(1+\mathrm{log}(1+x)\big).$$
	Then there exists a constant $C>0$ independent of $N$ such that
	\begin{align}\label{02010041}
		E\sup_{s\in[0,t_0]}\Theta(\|u^N(s)\|_{H^1(\mathcal{O}_s)}^2)\leq \Theta(\|u_0\|_{H^1(\mathcal{O}_0)}^2)+C(1+\|u_0\|_{L^2(\mathcal{O}_0)}^2).
	\end{align}
\end{lemma}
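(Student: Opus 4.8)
The plan is to apply the generalized It\^o formula of \cite{PWZZ} (of which Lemma \ref{ito2} is the $L^2_\sigma$ specialization) to the enstrophy-type functional $\|v^N(s)\|_{1,s}^2$ in the $H^1$ triplet, and then to run a second, real-valued It\^o expansion of $\Theta$ applied to it. The doubly-logarithmic profile of $\Theta$ is chosen precisely so that, after multiplication by $\Theta'$, every term on the right-hand side becomes integrable with constants independent of $N$.

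First I would set $v^N\dot{=}\widetilde{u^N}$ and record that, by the uniform-in-$s$ equivalence of norms on $\{\mathcal{O}_t\}$ together with \eqref{checkc1}, the quantity $\|u^N(s)\|_{H^1(\mathcal{O}_s)}^2$ is comparable to $y(s)\dot{=}\|v^N(s)\|_{1,s}^2=\|\nabla u^N(s)\|_{L^2(\mathcal{O}_s)}^2$. Applying the It\^o formula in the triplet $H^2\cap H_{0,\sigma}^1(\mathcal{O}_0)\hookrightarrow H_{0,\sigma}^1(\mathcal{O}_0)\hookrightarrow L^2_\sigma(\mathcal{O}_0)$ with inner products $(\cdot,\cdot)_{1,t}$ gives
$$dy=\big[2\langle\tilde A(s,v^N),v^N\rangle+\sum_k\|\widetilde{\sigma}_k(s,v^N)\|_{1,s}^2+(v^N,\Phi(s)v^N)_{1,0}\big]ds+2(v^N,\widetilde{\sigma}(s,v^N)\,dW_s)_{1,s}.$$
I would then estimate the drift term by term. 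The linear part is coercive, $\langle\tilde A_1(s,v^N),v^N\rangle\le -c\|u^N\|_{H^2(\mathcal{O}_s)}^2$, by Remark \ref{remark2} and \eqref{I1I1}; the transport contributions $\tilde A_3$ and $\Phi$ are lower order and absorbed by Young's inequality into $\tfrac{c}{4}\|u^N\|_{H^2(\mathcal{O}_s)}^2+C(1+y)$; and the Hilbert--Schmidt term is bounded by $f(s)(1+y)$ via \eqref{02062230-2'}.

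The hard part will be the convection term $\langle\tilde A_2,v^N\rangle$. Since $0\le g_N(\|v^N\|_{1,t})\le1$, I would simply discard the cutoff and bound it uniformly in $N$ by the transformed trilinear form, estimated through a two-dimensional logarithmic Sobolev (Brezis--Gallouet / Ladyzhenskaya) inequality so that, after a Young step, it contributes at most $\tfrac{c}{4}\|u^N\|_{H^2(\mathcal{O}_s)}^2+C(1+\|u^N\|_{L^2(\mathcal{O}_s)}^2)\,y\,\big(1+\log(e+\|u^N\|_{H^2(\mathcal{O}_s)}^2)\big)$. Collecting the terms, with a strictly negative dissipation $-c'\|u^N\|_{H^2(\mathcal{O}_s)}^2$ still to spare, yields
$$dy\le\big[-c'\|u^N\|_{H^2(\mathcal{O}_s)}^2+C(1+\|u^N\|_{L^2(\mathcal{O}_s)}^2)\,y\,(1+\log(e+\|u^N\|_{H^2(\mathcal{O}_s)}^2))+f(s)(1+y)\big]ds+dM_s,$$
where the martingale satisfies $d\langle M\rangle_s\le Cf(s)(1+y)^2\,ds$. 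All constants here are $N$-free, since the cutoff has been dropped and the geometric coefficients $h,r$ do not depend on $N$.

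Finally I would apply the classical It\^o formula to $\Theta(y)$, following the device inspired by \cite{KV}. As $\Theta'(x)=\big[(1+x)(1+\log(1+x))\big]^{-1}>0$ is decreasing and $\Theta''<0$, the second-order correction $\tfrac12\Theta''(y)\,d\langle M\rangle_s$ is nonpositive and may be dropped for an upper bound. The weight $\Theta'(y)$ tames the superlinear drift: the factor $y\Theta'(y)\le(1+\log(1+y))^{-1}$, together with the retained dissipation $-c'\Theta'(y)\|u^N\|_{H^2(\mathcal{O}_s)}^2$, lets me absorb the remaining $\log(e+\|u^N\|_{H^2(\mathcal{O}_s)}^2)$ growth by a further Young step, leaving an integrand controlled by $C(1+\|u^N\|_{L^2(\mathcal{O}_s)}^2)+Cf(s)$. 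Taking the supremum over $s\le t_0$ and expectations, the stochastic integral is handled by the Burkholder--Davis--Gundy inequality, whose bracket obeys $\int_0^{t_0}\Theta'(y)^2\,d\langle M\rangle_s\le C\int_0^{t_0}f(s)\,ds$ thanks to the same weight, and is therefore bounded uniformly in $N$. Combining with the standard a priori energy estimate $\mathbb{E}\big[\sup_{s\le t_0}\|u^N(s)\|_{L^2(\mathcal{O}_s)}^2+\int_0^{t_0}\|\nabla u^N\|_{L^2(\mathcal{O}_s)}^2\,ds\big]\le C(1+\|u_0\|_{L^2(\mathcal{O}_0)}^2)$ (itself $N$-independent, since the convection drops out because $\int_{\mathcal{O}_s}(u^N\cdot\nabla u^N)\cdot u^N\,dx=0$) gives $\mathbb{E}\sup_{s\le t_0}\Theta(y(s))\le\Theta(y(0))+C(1+\|u_0\|_{L^2(\mathcal{O}_0)}^2)$, which is \eqref{02010041} after reinstating the uniform norm equivalence and noting that $\Theta$ of comparable arguments differ by a bounded additive constant. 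The crux throughout is that the matched doubly-logarithmic weight converts the borderline two-dimensional enstrophy growth into an integrable, $N$-uniform right-hand side.
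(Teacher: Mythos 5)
Your proposal is correct and follows essentially the same route as the paper's proof: the It\^o formula of \cite{PWZZ} applied to $\|\widetilde{u^N}\|_{1,t}^2$ in the Gelfand triplet, a second It\^o expansion of $\Theta$, discarding the cutoff via $g_N\le 1$, the Br\'ezis--Gallou\"et inequality combined with a Young-type (Foias--Manley--Temam) step so that the weight $\Theta'$ reduces the convection term to something linear in $\|\nabla u^N\|^2_{L^2(\mathcal{O}_s)}$, a BDG bound for the martingale whose bracket is tamed by the same weight, and finally the $N$-independent $L^2$ energy estimate to control $E\int_0^{t_0}\|\widetilde{u^N}\|_{1,s}^2\,ds$. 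The only cosmetic difference is that you drop the second-order It\^o correction by concavity of $\Theta$, whereas the paper keeps it as term VI and bounds $|\Theta''|$ explicitly; both are valid.
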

\begin{proof}
	%First, we give the It\^o's formula for $\|u^N(s)\|^2_{H^1(\mathcal{O}_s)}$.
Applying Theorem 2.5 in \cite{PWZZ} to equation \eqref{01301253}, we derive that
	\begin{align*}
		\|\widetilde{u^N}(t)\|_{1,t}^2-\|u_0\|^2_{1,0}=&-2\int_{0}^{t}\int_{\mathcal{O}_0}(P_0h)^{-1}(P_0L^\#_h\widetilde{u^N})(y)\cdot (P_0L^\#_h\widetilde{u^N})(y)dyds\\&+2\int_{0}^{t}\int_{\mathcal{O}_0}(P_0h)^{-1}(P_0h)g_N(\|\widetilde{u^N}\|_{1,s})N(\widetilde{u^N},\widetilde{u^N})(y)\cdot (P_0L^\#_h\widetilde{u^N})(y)dyds\\&+2\int_{0}^{t}\int_{\mathcal{O}_0}(P_0h)^{-1}(P_0h)M\widetilde{u^N}(y)\cdot (P_0L^\#_h\widetilde{u^N})(y)dyds\\&-2\int_{0}^{t}\big(\widetilde{\sigma}(s,\widetilde{u^N}(s))dW_s, P_0L_h^\#\widetilde{u^N}\big)_{0,0}+\int_{0}^{t}\|\widetilde{\sigma}(s,\widetilde{u^N}(s))\|^2_{1,s}ds\\&+\int_{0}^{t}(\Phi(s)\widetilde{u^N}(s),\widetilde{u^N}(s))_{1,0}ds.
	\end{align*}
	Applying It\^o's formula to $\Theta(\|\widetilde{u^N}\|^2_{1,t})$, we then obtain
	\begin{align*}
		&\sup_{s\in[0,t]}\Theta(\|\widetilde{u^N}\|^2_{1,s})+2\int_{0}^{t}\Theta'(\|\widetilde{u^N}\|^2_{1,s})\int_{{\mathcal{O}_0}}(P_0h)^{-1}(P_0L_h^\#\widetilde{u^N})(y)\cdot(P_0L_h^\#\widetilde{u^N})(y) dyds\\\leq&\ \Theta(\|u_0\|^2_{1,0})+2\int_{0}^{t}\Theta'(\|\widetilde{u^N}\|^2_{1,s})g_N(\|\widetilde{u^N}\|_{1,s})\big|\int_{\mathcal{O}_0}(P_0h)^{-1}(P_0h) N(\widetilde{u^N},\widetilde{u^N})\cdot P_0L_h^\#\widetilde{u^N}(y)dy\big|ds\\&\ +2\int_{0}^{t}\Theta'(\|\widetilde{u^N}\|^2_{1,s})\int_{\mathcal{O}_0}\big|(P_0h)^{-1}(P_0h)M\widetilde{u^N}(y)\cdot (P_0L^\#_h\widetilde{u^N})(y)\big|dyds\\&\ +2\sup_{s\in[0,t]}\big|\int_{0}^{s}\Theta'(\|\widetilde{u^N}\|^2_{1,s})\big(\widetilde{\sigma}(s,\widetilde{u^N}(s))dW_s, P_0L_h^\#\widetilde{u^N}\big)_{0,0}\big|+\int_{0}^{t}\Theta'(\|\widetilde{u^N}\|^2_{1,s})\|\widetilde{\sigma}(s,\widetilde{u^N}(s))\|^2_{1,s}ds\\&\ +\int_{0}^{t}\Theta'(\|\widetilde{u^N}\|^2_{1,s})|(\Phi(s)\widetilde{u^N}(s),\widetilde{u^N}(s))_{1,0}|ds
		+2\int_{0}^{t}\big|\Theta''(\|\widetilde{u^N}\|^2_{1,s})\big|\sum_{k=1}^{\infty}(\widetilde{\sigma}_k(s,\widetilde{u^N}(s)),P_0L^\#_h\widetilde{u^N}(s))^2_{0,0}ds.\\ =&\ \Theta(\|u_0\|_{1,0}^2)+\mathrm{I+II+III+IV+V+VI}.
	\end{align*}
	
	Next, we estimate each term {separately}. For term I, we have
	\begin{align*}
		\mathrm{I}&=2\int_{0}^{t}\Theta'(\|\widetilde{u^N}\|^2_{1,s})g_N(\|\widetilde{u^N}\|_{1,s})\big|\int_{\mathcal{O}_0}(P_0h )^{-1}(P_0h) N(\widetilde{u^N},\widetilde{u^N})\cdot P_0L_h^\#\widetilde{u^N}(y)dy\big|ds\\&\leq C\int_{0}^{t}\Theta'(\|\widetilde{u^N}\|^2_{1,s})\|\widetilde{u^N}\|_{H^2(\mathcal{O}_0)}\|\widetilde{u^N}\|_{1,s}\|\widetilde{u^N}\|_{L^\infty} ds	\\&\leq C\int_{0}^{t}\Theta'(\|\widetilde{u^N}\|^2_{1,s})\|\widetilde{u^N}\|_{H^2(\mathcal{O}_0)}\|\widetilde{u^N}\|^2_{1,s}\big(1+\mathrm{log}\big(\frac{c_1\|\widetilde{u^N}\|_{H^2(\mathcal{O}_0)}^2}{\|\widetilde{u^N}\|^2_{1,s}}\big)\big)^\frac{1}{2} ds,
	\end{align*}
	where $c_1$ is a sufficiently large constant, and the third inequality is due to the Br\'ezis-Gallou\"{e}t-Waigner inequality (see (3.15) in \cite{KV}).
	
	For $a,\varepsilon>0,\mu\geq b>0$, it is {shown in} \cite{FMT} that
	\begin{align*}
		a\mu\big(1+\mathrm{log}\frac{\mu^2}{b^2}\big)^\frac{1}{2}\leq \varepsilon\mu^2+\frac{a^2}{\varepsilon}\mathrm{log}\frac{2a}{\varepsilon b}.
	\end{align*}
	Using the above inequality, we further {deduce that}
	\begin{align*}
		\mathrm{I}&\leq \frac{c}{4}\int_{0}^{t}\Theta'(\|\widetilde{u^N}\|_{1,s}^2)\|\widetilde{u^N}\|^2_{H^2(\mathcal{O}_0)}ds+C\int_{0}^{t}\Theta'(\|\widetilde{u^N}\|_{1,s}^2)\|\widetilde{u^N}\|_{1,s}^4\big(1+\mathrm{log} (C\|\widetilde{u^N}\|_{1,s})\big)ds.
	\end{align*}
	For the stochastic term, by \eqref{02062230-2'}, we have
	\begin{align*}
		E[\mathrm{III}]\leq CE\big[\big(\int_{0}^{t}f(s)\Theta'(\|\widetilde{u^N}\|_{1,s}^2)^2(1+\|\widetilde{u^N}\|_{1,s}^2)^2ds\big)^\frac{1}{2}\big].
	\end{align*}
	As for the remaining terms, it is easy to {see} that
	\begin{align*}
		\mathrm{II}&\leq C\int_{0}^{t}\Theta'(\|\widetilde{u^N}\|_{1,s}^2)\|\widetilde{u^N}\|_{1,s}\|\widetilde{u^N}\|_{H^2(\mathcal{O}_0)}\mathrm{d}s\\
		&\leq\frac{c}{4}\int_{0}^{t}\Theta'(\|\widetilde{u^N}\|_{1,s}^2)\|\widetilde{u^N}\|^2_{H^2(\mathcal{O}_0)}ds+C\int_{0}^{t}\Theta'(\|\widetilde{u^N}\|_{1,s}^2)\|\widetilde{u^N}\|^2_{1,s}ds,\\
		\mathrm{IV}&\leq C\int_{0}^{t}f(s)\Theta'(\|\widetilde{u^N}\|_{1,s}^2)\big(\|\widetilde{u^N}\|^2_{1,s}+1\big)ds,\ \mathrm{V}\leq C\int_{0}^{t}\Theta'(\|\widetilde{u^N}\|_{1,s}^2)\|\widetilde{u^N}\|^2_{1,s}ds,\\ \mathrm{VI}&\leq  C\int_{0}^{t}f(s)|\Theta''(\|\widetilde{u^N}\|_{1,s}^2)|\|\widetilde{u^N}\|_{1,s}^2(1+\|\widetilde{u^N}\|_{1,s}^2)ds.
	\end{align*}
	
	Note that
	\begin{align}\label{02011126}
		\Theta'(x)=\frac{1}{(1+x)(1+\mathrm{log}(1+x))}, \quad |\Theta''(x)|\leq \frac{2}{(1+x)^2\big(1+\mathrm{log}(1+x)\big)}.
	\end{align}
	In combination with the above estimates, we {obtain}
	\begin{align*}
		&E\big[\sup_{s\in[0,t]}\Theta(\|\widetilde{u^N}\|^2_{1,s})\big]\\\leq&\  E\big[\Theta(\|u_0\|^2_{1,0})\big]+CE\big[\int_{0}^{t}(f(s)+1)\big(\Theta'(\|\widetilde{u^N}\|_{1,s}^2)+\|\widetilde{u^N}\|_{1,s}^2|\Theta''(\|\widetilde{u^N}\|_{1,s}^2)|\big)\big(1+\|\widetilde{u^N}\|_{1,s}^2\big)ds\big]\\&\ +CE\big[\big(\int_{0}^{t}f(s)\Theta'(\|\widetilde{u^N}\|_{1,s}^2)^2(1+\|\widetilde{u^N}\|_{1,s}^2)^2ds\big)^\frac{1}{2}\big] \\
		&\ +CE\big[\int_{0}^{t}\Theta'(\|\widetilde{u^N}\|_{1,s}^2)\|\widetilde{u^N}\|_{1,s}^4\big(1+\mathrm{log} (C\|\widetilde{u^N}\|_{1,s})\big)ds\big]\\\leq& E\big[\Theta(\|u_0\|^2_{1,0})\big]+C_{f}+CE\big[\int_{0}^{t}\|\widetilde{u^N}\|_{1,s}^2ds\big].
	\end{align*}
	{On the other hand}, as in \eqref{uniq}, we can show that
	\begin{align*}
		E\big[\int_{0}^{t}\|\widetilde{u^N}\|_{1,s}^2ds\big]\leq C_f(\|u_0\|^2_{L^2(\mathcal{O}_0)}+1).
	\end{align*}
	
	We complete the proof of the lemma.
\end{proof}

\vskip 0.2cm

Now, we are ready to complete the proof of the claim \eqref{02010004}. By the moment estimate \eqref{02010041}, we have
$$\mathbb{P}\big(\tau^N< t_0\big)\leq\frac{\Theta(\|u_0\|_{H^1(\mathcal{O}_0)}^2)+C(1+\|u_0\|^2_{L^2(\mathcal{O}_0)})}{\Theta(N)}. $$
Since $\tau^N$ is increasing, we derive that
\begin{align*}
	\mathbb{P}\Big(\mathop{\scalebox{1.5}[1.5]{$\cup$}}_{N=1}^\infty\{ \tau^N = t_0 \}\Big)=1,
\end{align*}
which completes the proof of the claim \eqref{02010004}.
Thus, we establish the well-posedness of the two dimensional Navier-Stokes equation \eqref{ns} on the small interval $[0, t_0]$.

\section{Well-posedness on the whole interval $[0,T]$}\label{section6}

In this section, we are devoted to extending the well-posedness of the stochastic Navier-Stokes equation \eqref{ns} from the small time interval $[0,t_0]$  to the {whole} time interval $[0,T]$. The proof will involve refined constant estimates.

Let us begin by recalling that the choice of time $t_0$ is such that for any $t \in [0,t_0]$,
$$\|(L_h^\#-\Delta_0)v\|_{L^2(\mathcal{O}_0)}\leq\frac{1}{2C_0}\|v\|_{H^2(\mathcal{O}_0)}.$$
where $C_0$ is the smallest constant such that for any $u\in H^2\cap H^1_{0,\sigma}(\mathcal{O}_0)$,
\begin{align}\label{02272030}
	\|u\|_{H^2(\mathcal{O}_0)}\leq C_0\|A_0u\|_{L^2(\mathcal{O}_0)}.
\end{align}
By Proposition \ref{estimate3}, we can find a uniform constant $C_0$ such that if we replace $\mathcal{O}_0$ in \eqref{02272030} with any $\mathcal{O}_t$, $t\in[0,T]$, \eqref{02272030} still holds with $A_0$ replaced with $A_t$. For $t \geq t_0$, the reference domain $\mathcal{O}_{0}$ should be replaced by $\mathcal{O}_{t_0}$, and the corresponding parametrization map $r(t, \cdot)$ and its inverse $\bar{r}(t, \cdot)$ should be replaced by
\begin{align*}
	&r^{t_0}(t, \cdot):\mathcal{O}_{t_0}\rightarrow\mathcal{O}_t,
	\  r^{t_0}(t,z)=r\big(t,\bar{r}(t_0,z)\big),\\&\bar{r}^{t_0}(t, \cdot):\mathcal{O}_{t}\rightarrow\mathcal{O}_{t_0},\ \bar{r}^{t_0}(t,x)=r\big(t_0,\bar{r}(t,x)\big).
\end{align*}
Recalling the definition of $L^\#_h$ in \eqref{02021304}, the corresponding elliptic operator $L^\#_{h_{t_0}}$ is now given by
\begin{align}\label{02021310}
	[{L}^\#_{h_{t_0}}v]_l= h^{t_0}_{kl}(L_{h_{t_0}} v)_k=\frac{\partial r^{t_0}_m}{\partial z_l}\frac{\partial}{\partial z_n}\Big(\frac{\partial}{\partial z_j}\big(v_p\frac{\partial r^{t_0}_m}{\partial z_p}\big) h_{t_0}^{jn}\Big),
\end{align}
where
\begin{align*}
	h_{t_0}^{jk}(t,z)=\frac{\partial\bar{r}_k^{t_0}}{\partial x_i}\frac{\partial\bar{r}_j^{t_0}}{\partial x_i}\big(t,r^{t_0}(t,z)\big), h^{t_0}_{mn}(t,z)=\frac{\partial{r}_l^{t_0}}{\partial z_m}\frac{\partial{r}_l^{t_0}}{\partial z_n}\big(t,z\big).
\end{align*}

We claim that there exists a constant $\delta>0$ independent of $t_0\in[0,T]$, such that for all $t \in [t_0, (t_0+\delta)\wedge T]$,
\begin{align}\label{04171343}
	\|[{L}^\#_{h_{t_0}}v]-\Delta_{t_0}v \|_{L^2(\mathcal{O}_{t_0})}\leq\frac{1}{2C_0}\|v\|_{H^2(\mathcal{O}_{t_0})}.
\end{align}
Express \eqref{02021310} in the following form:
$$[{L}^\#_{h_{t_0}}v]_l(t,z)=\sum_{k,l,|\alpha|\leq 2}P^{t_0}_{\alpha,(k,l)}(t,z)\frac{\partial^{|\alpha|} v_k }{\partial\ z^\alpha}.$$
To prove the claim \eqref{04171343}, it is sufficient to show that there exists a function $\omega(\cdot):\mathbb{R}\rightarrow\mathbb{R}$, which is continuous at 0 with $\omega(0)=0$, such that for arbitrary $\alpha$, $k$, $l$, $t_0\in[0,T]$ and $t\geq t_0$,
\begin{align}\label{02021742}
	\|P^{t_0}_{\alpha, (k,l)}(t,\cdot)-P^{t_0}_{\alpha, (k,l)}(t_0,\cdot)\|_{L^\infty(\mathcal{O}_{t_0})}\leq\omega(|t-t_0|).
\end{align}
In particular, $P^{t_0}_{\alpha, (k,l)}(t_0,\cdot)$ satisfies
\begin{equation}P^{t_0}_{\alpha, (k,l)}(t_0,\cdot)=
	\begin{cases}
		0,\ |\alpha|=2 \ \text{and} \ \alpha_1\neq\alpha_2 \ \text{or} \ |\alpha|<2, \text{or} \ k\neq l,\\
		1,\ |\alpha|=2\ \text{and}\ \alpha_1=\alpha_2,\text{and }k=l.
	\end{cases}
\end{equation}
For the reader's convenience, we provide a proof of the uniform continuity for the coefficients of second order differential operators. The proofs for the remaining terms follow analogously.
Note that for $\alpha=(j,n)$ and $k,l \in \{1,2\}$,
$$ P^{t_0}_{\alpha, (k,l)}(t,z)=h^{t_0}_{kl}h_{t_0}^{jn}(t,z)=\frac{\partial{r}_m^{t_0}}{\partial z_k}\frac{\partial{r}_m^{t_0}}{\partial z_l}(t,z)
\cdot \frac{\partial\bar{r}_n^{t_0}}{\partial x_i}\frac{\partial\bar{r}_j^{t_0}}{\partial x_i}\big(t,r^{t_0}(t,z)\big),$$
where
$$\frac{\partial {r}_i^{t_0}}{\partial z_j}(t,z)=\frac{\partial r_i}{\partial y_k}\big(t,\bar{r}(t_0,z)\big)\frac{\partial \bar{r}_k}{\partial z_j}(t_0,z),\ \frac{\partial \bar{r}^{t_0}_k}{\partial x_l}(t,x)=\frac{\partial r_k}{\partial y_m}(t_0,\bar{r}(t,x))\frac{\partial\bar{r}_m}{\partial x_l}(t,x).	$$
By the regularity assumptions on $r$ and $\bar{r}$, we can find a constant $C>0$ such that for any $t_0\in[0,T]$ and $t \geq t_0$,
\begin{align}\label{02021607}
	\sup_{z\in\mathcal{O}_{t_0},{i,j\leq 2}}|\frac{\partial {r}_i^{t_0}}{\partial z_j}|(t,z)+ \sup_{x\in\mathcal{O}_{t},{k,l\leq 2}}|\frac{\partial \bar{r}^{t_0}_k}{\partial x_l}|(t,x)\leq C.
\end{align}
Also
\begin{align}
	\nonumber\big|\frac{\partial {r}_i^{t_0}}{\partial z_j}(t,z)-\frac{\partial {r}_i^{t_0}}{\partial z_j}(t_0,z)\big|&\leq\big|\Big(\frac{\partial r_i}{\partial y_k}\big(t,\bar{r}(t_0,z)\big)-\frac{\partial r_i}{\partial y_k}\big(t_0,\bar{r}(t_0,z)\big)\Big)\big|\cdot |\frac{\partial \bar{r}_k}{\partial z_j}(t_0,z)|\\&\leq C\omega_1(|t-t_0|),\label{02021730}
\end{align}
where $\omega_1(\cdot)$ denotes the modulus of continuity in time for the matrix $\{\frac{\partial r_i}{\partial y_k}\}_{{i,k\leq 2}}$.
On the other hand, we can also show that
\begin{align*}
	\big|\frac{\partial \bar{r}^{t_0}_k}{\partial x_l}(t,r^{t_0}(t,z))-\frac{\partial \bar{r}^{t_0}_k}{\partial x_l}(t_0,z)\big|&=\big| \frac{\partial r_k}{\partial y_m}(t_0,\bar{r}(t_0,z))\frac{\partial \bar{r}_m}{\partial x_l}(t,r^{t_0}(t,z))-\frac{\partial r_k}{\partial y_m}(t_0,\bar{r}(t_0,z))\frac{\partial \bar{r}_m}{\partial x_l}(t_0,z)\big|\\&\leq C|\frac{\partial \bar{r}_m}{\partial x_l}(t,{r}(t,\bar{r}(t_0,z)))-\frac{\partial \bar{r}_m}{\partial x_l}(t_0,z)\big|.
\end{align*}
Since $$\frac{\partial \bar{r}_m}{\partial x_l}\big(t,{r}(t,\bar{r}(t_0,z))\big)=\big[\frac{\partial r_\cdot}{\partial y_\cdot}(t,\bar{r}(t_0,z))\big]_{ml}^{-1},$$
by \eqref{02021607}, we derive that
\begin{align}\label{02021731}
	\nonumber&|\frac{\partial \bar{r}_m}{\partial x_l}(t,{r}(t,\bar{r}(t_0,z)))-\frac{\partial \bar{r}_m}{\partial x_l}(t_0,z)\big|\\=\nonumber&\big|\big[\frac{\partial r_\cdot}{\partial y_\cdot}(t,\bar{r}(t_0,z))\big]_{ml}^{-1}-\big[\frac{\partial r_\cdot}{\partial y_\cdot}(t_0,\bar{r}(t_0,z))\big]_{ml}^{-1}\big|\\\leq\nonumber&\big|\big[\frac{\partial r_\cdot}{\partial y_{\cdot}}\big]^{-1}(t,\bar{r}(t_0,z))\cdot \big[\frac{\partial r_\cdot}{\partial y_\cdot}(t,\bar{r}(t_0,z))-\frac{\partial r_\cdot}{\partial y_\cdot}(t_0,\bar{r}(t_0,z))\big]\cdot \big[\frac{\partial r_\cdot}{\partial y_\cdot}(t_0,\bar{r}(t_0,z))\big]^{-1}\big|\\\leq& C\omega_1(|t-t_0|).
\end{align}
From the above estimates, we obtain that for $|\alpha|=2$ and $k,l\in \{1,2\}$,
\begin{equation}\label{contniuous1}
	\|P^{t_0}_{\alpha, (k,l)}(t,\cdot)-P^{t_0}_{\alpha, (k,l)}(t_0,\cdot)\|_{L^\infty(\mathcal{O}_{t_0})}\leq C\omega_1(|t-t_0|).
\end{equation}
We can also infer by applying the same method that for $|\alpha|\leq1$ and $k,l\in \{1,2\}$,
\begin{equation}\label{contniuous2}
	\|P^{t_0}_{\alpha, (k,l)}(t,\cdot)-P^{t_0}_{\alpha, (k,l)}(t_0,\cdot)\|_{L^\infty(\mathcal{O}_{t_0})}\leq C\omega_2(|t-t_0|),
\end{equation}
where $\omega_2(\cdot)$ represents the modulus of continuity in time for $\{\frac{\partial r_i}{\partial y_k},\frac{\partial^2 r_i}{\partial y_k \partial y_j}, \frac{\partial^3 r_i}{\partial y_k \partial y_j \partial y_l}\}_{{i,j,k,l\leq 2}}$. By \eqref{contniuous1} and \eqref{contniuous2}, we get \eqref{02021742}.
In particular, it follows from \eqref{02272030} and \eqref{04171343} that there exists a constant $\delta>0$ such that for any $t_0\in[0,T]$, $t\in[t_0,(t_0+\delta)\wedge T]$,
$$\|A_{t_0}^{-1}P_{t_0}{L}^\#_{h_{t_0}}v-v \|_{H^2(\mathcal{O}_{t_0})}\leq\frac{1}{2}\|v\|_{H^2(\mathcal{O}_{t_0})}. $$

Now we come back to the proof of the global existence of solutions to equation \eqref{ns}.

First, combining the results from the previous section and the first part of this section, we know that for any $t_1\in[0,T]$ and  initial value $x_{t_1}\in H_{0,\sigma}^1(\mathcal{O}_{t_1})$, equation \eqref{ns} has a unique solution $u$ on the interval $[t_1,(t_1+\delta)\wedge T]$ such that
$$u\in C\big([t_1,(t_1+\delta)\wedge T], H_{0,\sigma}^1(\mathcal{O}_\cdot)\big)\cap L^2([t_1,(t_1+\delta)\wedge T], H^2\cap H^1_{0,\sigma}(\mathcal{O}_\cdot)). $$
Therefore, by Yamada-Watanabe thereom, for any $t_1\in[0,T]$, there exists a measurable mapping
\begin{align*}
	\Phi_{t_1}: &H_{0,\sigma}^1(\mathcal{O}_{t_1})\times C_0([t_1,(t_1+\delta)\wedge T], \mathbb{R}^\infty)\longrightarrow\\&  C\big([t_1,(t_1+\delta)\wedge T], H_{0,\sigma}^1(\mathcal{O}_\cdot)\big)\cap L^2\big([t_1,(t_1+\delta)\wedge T], H^2\cap H^1_{0,\sigma}(\mathcal{O}_\cdot)\big),
\end{align*}
such that for any complete probability space $(\Omega,\mathcal{F},\{\mathcal{F}_t\}_{t\in[t_1,t_1+\delta]},\mathbb{P})$, any $\mathcal{F}_t$-adapted $l^2$-cylindrical Wiener process $W_\cdot$ on it, and any initial value $x_{t_1}\in H^1_{0,\sigma}(\mathcal{O}_{t_1})$, $\Phi_{t_1}(x_{t_1}, W_\cdot)$ is the unique solution to equation \eqref{ns} on the interval $[t_1,(t_1+\delta)\wedge T]$. Then we can define an $H^1_{0,\sigma}(\mathcal{O}_\cdot)$-valued predictable process $u$ on $[0,T]$ as follows:
\begin{equation}u(t)=
	\begin{cases}
		\Phi_0(x,W_\cdot)(t),\ \ \ t\in[0,\delta],\\
		\Phi_{n\delta}\big(u(n\delta),W^{n}_\cdot\big)(t),\ t\in[n\delta,(n+1)\delta\wedge T],
	\end{cases}
\end{equation}
where for any $n \geq 1$, $W_{\cdot}^{n}$ represents the Wiener process $\{W_{t}-W_{n\delta}\}_{t \in [n\delta,(n+1)\delta\wedge T]}$.
It is easy to see that $u$ is a solution to equation \eqref{ns} on $[0,T]$. The uniqueness of solutions to equation \eqref{ns} follows from a similar argument used in Section \ref{section5}. The global well-posedness of \eqref{ns} then follows.

\section{Appendix}

In this section, we present a result on the equivalence of several norms on the space $H^2\cap H^1_{0,\sigma}(\mathcal{O}_t)$.
\begin{proposition}\label{estimate3}
	Let $A_t$ and $\Delta_t$ denote the Stokes operator and Laplace operator on $\mathcal{O}_t$, respectively. Define the following norms on $H^2\cap H^1_{0,\sigma}(\mathcal{O}_t)$:
	\begin{itemize} {
		}    \item[(i)]: $\|u\|^2_{H^2(\mathcal{O}_t)}=\|u\|_{L^2(\mathcal{O}_t)}^2+\|\nabla u\|^2_{L^2(\mathcal{O}_t)}+\|D^2u\|^2_{L^2(\mathcal{O}_t)}$,
		\item[(ii)]: $\|u\|^2_2=\|u\|_{L^2(\mathcal{O}_t)}^2+\|\Delta_t u\|_{L^2(\mathcal{O}_t)}^2$,
		\item[(iii)]: $\|u\|^2_3=\|u\|_{L^2(\mathcal{O}_t)}^2+\|A_tu\|^2_{L^2(\mathcal{O}_t)}$,
		\item[(iv)]: $\|u\|^2_4=\|\Delta_tu\|^2_{L^2(\mathcal{O}_t)}$,
		\item[(v)]: $\|u\|^2_5=\|A_tu\|^2_{L^2(\mathcal{O}_t)}$.
	\end{itemize}
	Then the above five norms are {all} equivalent, and the equivalence constants are independent of $t \in [0,T]$.
\end{proposition}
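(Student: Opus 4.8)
The plan is to reduce the five-fold equivalence to two ``master'' two-sided estimates with $t$-uniform constants, namely
\[
\|u\|_{H^2(\mathcal{O}_t)}\ \simeq\ \|\Delta_t u\|_{L^2(\mathcal{O}_t)}
\qquad\text{and}\qquad
\|u\|_{H^2(\mathcal{O}_t)}\ \simeq\ \|A_t u\|_{L^2(\mathcal{O}_t)},
\]
both on $H^2\cap H^1_{0,\sigma}(\mathcal{O}_t)$. Once these are in hand, the five norms coincide up to uniform constants by transitivity. Indeed, norms (ii) and (iii) differ from (iv) and (v) respectively only by the additive term $\|u\|_{L^2(\mathcal{O}_t)}$; this term is trivially dominated by $\|u\|_{H^2(\mathcal{O}_t)}$, which yields the upper bounds, while the lower bounds follow because $\|u\|_{L^2(\mathcal{O}_t)}$ is itself controlled by $\|\Delta_t u\|_{L^2(\mathcal{O}_t)}$ (through the uniform Poincar\'e inequality on $\{\mathcal{O}_t\}$, a consequence of Rayleigh--Faber--Krahn together with the volume-preserving property \eqref{M}, exactly as already used in Section \ref{sec:preliminaries}) and by $\|A_t u\|_{L^2(\mathcal{O}_t)}$ (through the master Stokes estimate itself).

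The easy directions of both master estimates are immediate: $\|\Delta_t u\|_{L^2(\mathcal{O}_t)}\le\sqrt2\,\|D^2 u\|_{L^2(\mathcal{O}_t)}\le\sqrt2\,\|u\|_{H^2(\mathcal{O}_t)}$, and since the Leray projection $P_t$ is an orthogonal projection on $L^2(\mathcal{O}_t)$, $\|A_t u\|_{L^2(\mathcal{O}_t)}=\|P_t\Delta_t u\|_{L^2(\mathcal{O}_t)}\le\|\Delta_t u\|_{L^2(\mathcal{O}_t)}$. The substantive directions are a priori regularity estimates. For the Laplacian I would apply the Agmon--Douglis--Nirenberg estimate componentwise to $u_i\in H^2\cap H^1_0(\mathcal{O}_t)$, obtaining $\|u\|_{H^2(\mathcal{O}_t)}\le C_t(\|\Delta_t u\|_{L^2(\mathcal{O}_t)}+\|u\|_{L^2(\mathcal{O}_t)})$, and then absorb the $L^2$ term via Poincar\'e. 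For the Stokes operator, the Leray--Helmholtz decomposition $\Delta_t u=A_t u+\nabla\pi$ shows that $(u,\pi)$ solves the stationary Stokes system $-\Delta u+\nabla\pi=-A_t u$, $\mathrm{div}\,u=0$, $u|_{\partial\mathcal{O}_t}=0$; the Cattabriga--Solonnikov $L^2$ regularity theory then gives $\|u\|_{H^2(\mathcal{O}_t)}+\|\nabla\pi\|_{L^2(\mathcal{O}_t)}\le C_t\|A_t u\|_{L^2(\mathcal{O}_t)}$.

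The main obstacle is to show that the constants $C_t$ in these two regularity estimates can be chosen uniformly over $t\in[0,T]$; the per-$t$ inequalities themselves are entirely classical. My plan is to transport both problems to the fixed reference domain $\mathcal{O}_0$ through the diffeomorphism $r(t,\cdot)$: since $r\in C^{2,3}_{t,x}$ and $\det(\partial\bar r/\partial x)=1$, the Jacobian and every coefficient entering the pulled-back operators are bounded in $C^1(\overline{\mathcal{O}}_0)$, uniformly elliptic, and continuous in $t$, with all relevant bounds attained uniformly on the compact interval $[0,T]$. The ADN and Cattabriga--Solonnikov constants depend only on the ellipticity constant, the moduli of continuity of the leading coefficients, the sup-norms of the lower-order coefficients, and the fixed domain $\mathcal{O}_0$; hence they are uniformly bounded. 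Transforming the $H^2$ and $L^2$ norms back to $\mathcal{O}_t$ costs only factors controlled by the uniform $C^2$ bounds on $r$ and $\bar r$, so the final equivalence constants are independent of $t$. (Alternatively, for the Stokes part one may invoke directly the uniform steady-Stokes $L^2$ estimate on $\{\mathcal{O}_t\}_{t\in[0,T]}$ of \cite{H}.) I expect this uniformization step to be the only delicate point in the argument.
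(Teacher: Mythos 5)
Your overall skeleton is sound, and your parenthetical fallback for the Stokes estimate --- invoking the uniform steady-Stokes $L^2$ estimate on $\{\mathcal{O}_t\}_{t\in[0,T]}$ from \cite{H} --- is essentially the paper's own proof: it cites Lemma 1 of \cite{H} to get $\|u\|_{H^2(\mathcal{O}_t)}\leq C(\|A_t u\|_{L^2(\mathcal{O}_t)}+\|\nabla u\|_{L^2(\mathcal{O}_t)})$ with a $t$-independent constant, absorbs the gradient term via the interpolation $\|\nabla u\|^2_{L^2(\mathcal{O}_t)}=-(u,A_tu)_{L^2(\mathcal{O}_t)}\leq\tfrac12\|A_tu\|^2_{L^2(\mathcal{O}_t)}+\tfrac12\|u\|^2_{L^2(\mathcal{O}_t)}$, and then removes the remaining $L^2$ terms using the Rayleigh--Faber--Krahn inequality (for $\|\cdot\|_2\simeq\|\cdot\|_4$) and Kelliher's comparison of the first eigenvalues of the Stokes operator and the Dirichlet Laplacian (for $\|\cdot\|_3\simeq\|\cdot\|_5$). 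Your variant is in one respect tidier: once the master estimate $\|u\|_{H^2(\mathcal{O}_t)}\leq C\|A_tu\|_{L^2(\mathcal{O}_t)}$ is in hand, the bound $\|u\|_{L^2(\mathcal{O}_t)}\leq C\|A_tu\|_{L^2(\mathcal{O}_t)}$ comes for free, so the eigenvalue-comparison ingredient from \cite{K} becomes unnecessary.

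However, your \emph{primary} route to uniformity for the Stokes part has a genuine gap. Pulling the Stokes system back to $\mathcal{O}_0$ by $r(t,\cdot)$ does not produce a system to which Cattabriga--Solonnikov theory can simply be cited: the transformed problem is a generalized Stokes system whose viscous term is (a relative of) the operator $L_h^\#$ of \eqref{02021304}, with the pressure gradient --- and, unless one uses the Piola transform, the divergence constraint as well --- acquiring variable coefficients. The paper states explicitly, both in the introduction and in the verification of (\hyperlink{C4}{C4}) in Subsection \ref{section2}, that existing $L^2$ results for generalized steady Stokes systems (e.g. \cite{BS,M}) do not apply to the operators arising from this change of variables; this is precisely why the paper resorts to operator perturbation theory on short time intervals, and why its whole argument has a ``piecewise'' structure governed by the lower bound $\delta$ of Section \ref{section6}. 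Your assertion that the ADN and Cattabriga--Solonnikov constants depend only on the ellipticity constant, the coefficient moduli and the fixed domain is fine for the scalar Laplacian (your componentwise argument there is standard and correct), but for the transformed Stokes system it would require verifying the Douglis--Nirenberg ellipticity and complementing conditions for that specific variable-coefficient system and tracking the constant dependence --- a nontrivial task that cannot be settled by a one-sentence appeal to the classical theory, and one the authors deliberately circumvented. Since your fallback route through \cite{H} closes this gap, the proposal is salvageable as written, but the pullback argument for the Stokes part should either be carried out in detail or dropped in favor of that citation.
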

\begin{proof}
	Obviously, $\|u\|_3\leq \|u\|_2\leq \|u\|_{H^2(\mathcal{O}_t)}$. By Lemma 1 in \cite{H}, there exists a constant $C>0$ such that for any $t\in[0,T]$, $u\in H^2\cap H_{0,\sigma}^1(\mathcal{O}_t)$
	\begin{align*}
		\|u\|_{H^2(\mathcal{O}_t)}\leq C(\|A_tu\|_{L^2(\mathcal{O}_t)}+\|\nabla u\|_{L^2(\mathcal{O}_t)}).
	\end{align*}
	Since
	\begin{align*}
		\|\nabla u\|_{L^2(\mathcal{O}_t)}^2=-\int_{\mathcal{O}_t}u \cdot A_tu\ dx\leq \frac{1}{2}\|A_tu\|_{L^2(\mathcal{O}_t)}^2+\frac{1}{2}\|u\|_{L^2(\mathcal{O}_t)}^2,
	\end{align*}
	we can deduce that
	\begin{align*}
		\|u\|_{H^2(\mathcal{O}_t)}\leq C(\|A_tu\|_{L^2(\mathcal{O}_t)}+\|u\|_{L^2(\mathcal{O}_t)}),
	\end{align*}
	which implies $\|u\|_{H^2(\mathcal{O}_t)}\leq \|u\|_3$, proving the equivalence of $\|\cdot\|_{H^2(\mathcal{O}_t)},\|\cdot\|_2$ and $\|\cdot\|_3$.
	For the equivalence between $\|\cdot\|_2$ and $\|\cdot\|_4$, by the Rayleigh-Faber-Krahn inequality, there exists a constant $c_0>0$ independent of $t\in[0,T]$ such that for any $u\in H^2\cap H^1_0(\mathcal{O}_t)$,
	$$\|\Delta_t u\|_{L^2(\mathcal{O}_t)} \geq c_0\|u\|_{L^2(\mathcal{O}_t)},$$
	implying the equivalence of $\|\cdot\|_2$ and $\|\cdot\|_4$. By the result in \cite{K}, we know that the spectral gap of $A_t$ is strictly greater than the spectral gap of $\Delta_t$, which implies that the norms $\| \cdot \|_3$ and $\| \cdot \|_5$ are equivalent, and the equivalence constant is independent of $t$.

\end{proof}
\vskip 0.3cm

\noindent{\bf Acknowledgements}\\
This work is partially supported by National Key R\&D
Program of China(No.2022YFA1006001), and by  NSFC (No. 12131019, 12171321, 12371151).

\end{document}